\theoremstyle{plain}
\newtheorem{corollary}[equation]{Corollary}
\newtheorem{theorem}[equation]{Theorem}
\theoremstyle{remark}
\newtheorem{example}[equation]{Example}
\newtheorem{remark}[equation]{Remark}
\theoremstyle{definition}
\newcommand{\A}{\mathbb A}
\newcommand{\F}{\mathbb F}
\newcommand{\HH}{\mathbb H}
\newcommand{\Q}{\mathbb Q}
\newcommand{\R}{\mathbb R}
\newcommand{\Z}{\mathbb Z}
\newcommand{\fraka}{\mathfrak a}
\newcommand{\frakb}{\mathfrak b}
\newcommand{\la}{\langle}
\newcommand{\ra}{\rangle}
\newcommand{\Qbar}{\overline{\Q}}
\newcommand{\w}{\omega}
\DeclareMathOperator{\End}{End}
\DeclareMathOperator{\Gal}{Gal}
\DeclareMathOperator{\Jac}{Jac}
\DeclareMathOperator{\PSL}{PSL}
\DeclareMathOperator{\Nm}{Nm}
\DeclareMathOperator{\Aut}{Aut}
\DeclareMathOperator{\CM}{CM}
\def\tablebody{}
        \protected@edef\tablebody{\tablebody
                \textbf{\number\n.}& shortText
                \tabularnewline
        }
\let\mcnewpage=\newpage
\newcommand{\TrickSupertabularIntoMulticols}{%
  \renewcommand\newpage{%
    \if@firstcolumn
      \hrule width\linewidth height0pt
      \columnbreak
    \else
      \mcnewpage
    \fi
  }%
}
\begin{document}

\title{Rational points on Atkin-Lehner quotients of geometrically hyperelliptic Shimura curves}

\author{\sc Oana Padurariu}
\address{Oana Padurariu \\
Dept. of Mathematics \& Statistics\\  
Boston University\\
111 Cummington Mall\\
Boston, MA 02215\\
USA}
\urladdr{https://sites.google.com/view/oana-padurariu/home}
\email{oana@bu.edu}

\author{\sc Ciaran Schembri}
\address{Ciaran Schembri \\
Dept. of Mathematics\\  
Dartmouth College\\
 27 N Main St\\
Hanover, NH 03755\\
USA}
\urladdr{https://math.dartmouth.edu/~cschembri/}
\email{ciaran.schembri@dartmouth.edu}

\maketitle

\begin{abstract}
Guo and Yang give defining equations for all geometrically hyperelliptic Shimura curves $X_0(D,N)$.  In this paper we compute the $\Q$-rational points on the Atkin-Lehner quotients of these curves using a variety of techniques.  We also determine which rational points are CM for many of these curves.
\end{abstract}

\bibliographystyle{alpha}


\section{Introduction}

It is an important problem to study the modular curves $X_0(N)$ and their rational points. These curves are the coarse moduli spaces for elliptic curves with a $\Gamma_0(N)$-level structure and an understanding of their rational points leads to a classification of elliptic curves equipped with an isogeny (cf. \cite{Mazur78}).

There is a straightforward generalization of these modular curves: one may think of $\Gamma_0(N)$ as a discrete subgroup of the units of the rational indefinite quaternion algebra $M_2(\Q)$ (see \S\ref{Background} for definitions).  So let $D$ be the discriminant of a rational indefinite quaternion algebra.  There is an analogue of $\Gamma_0(N)$ for the quaternion algebra of discriminant $D$ denoted $\Gamma_0(D,N)$.  This group acts as isometries on the upper half plane and the quotient of this action is the \textit{Shimura curve} $X_0(D,N)$ (if $D>1$ and a modular curve if $D=1$). 

Shimura curves are also coarse moduli spaces, in the case of $X_0(D,N)$ it is for principally polarized abelian surfaces equipped with an embedding of an Eichler order of level $N$ into the endomorphism ring.  Thus a study of the points on $X_0(D,N)$ sheds light on the classification of these surfaces with many endomorphisms.

Whilst modular and Shimura curves share many properties there are some important distinctions.  Firstly,  the Shimura curves $X_0(D,N)$ have no real-valued points \cite{Shimura75}.  One may profitably study points over totally complex fields (cf. \cite{Jordan86, SY04}). However, if we are concerned with the abelian surfaces being defined over $\Q$ then we must consider rational points on \textit{Atkin-Lehner} quotients of $X_0(D,N)$.

Secondly,  it is quite difficult to compute explicit defining equations for Shimura curves, which means that there is a relative lack of defining equations in the literature. The main reason for this is that Shimura curves have no cusps and thus there are no Fourier expansions to work with.  Recently,  Guo and Yang computed defining equations for all Shimura curves $X_0(D,N)$ which are geometrically hyperelliptic \cite{GY17}.  The main aim of our paper is to compute the $\Q$-rational points on the Atkin-Lehner quotients of these curves.

\begin{theorem}
Let $X_0(D,N)$ be a Shimura curve which is hyperelliptic over $\Qbar$ and $W$ a subgroup of Atkin-Lehner involutions. Then defining equations for the Atkin-Lehner quotient curve $X_0(D,N)/W$ have been computed and in the case that the quotient curve has finitely many rational points the set $(X_0(D,N)/W)(\Q)$ is given explicitly (see \S\ref{Appendix} for details).  Furthermore,  when the level $N$ is $1$ and the quotient curve has finitely many rational points it is known which of these points are CM.  
\end{theorem}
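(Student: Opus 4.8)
The plan is to reduce every assertion to explicit computations built on the Guo--Yang models. Since each $X_0(D,N)$ under consideration is hyperelliptic over $\Qbar$, we have a model of the shape $y^2 = f(x)$ (possibly after the twist forced by the absence of real points on $X_0(D,N)$). The first step is to write down each Atkin-Lehner involution $w_m$ explicitly as an automorphism of this model. Because the hyperelliptic involution is central in the geometric automorphism group, every $w_m$ commutes with it and therefore descends to a fractional-linear action on the $x$-coordinate, together with a prescribed action on $y$. Given a subgroup $W \subseteq \AL(D,N)$, the quotient $X_0(D,N)/W$ is then obtained by computing generators of the field of $W$-invariant functions: in practice one forms the symmetric combinations of $x$ and $y$ under $W$, reads off a plane model, and records its genus $g'$.

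The second step is to determine $(X_0(D,N)/W)(\Q)$, organised according to $g'$. When $g' = 0$ the quotient is either empty or isomorphic to $\PP^1$, and testing for a single rational point decides which; these are precisely the curves with infinitely many rational points, which are excluded from the explicit list. When $g' = 1$ I would pass to the Jacobian: if the Mordell--Weil rank is positive there are again infinitely many points, while if the rank is $0$ the point set is finite and I enumerate it as the torsion. When $g' \geq 2$ finiteness is guaranteed by Faltings, and the real work is producing a provably complete list. The principal tool is Chabauty--Coleman, applied either to the quotient directly or to a further quotient mapping to a curve of smaller genus; this succeeds whenever the relevant Mordell--Weil rank is strictly below the genus. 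When the rank is too large I would instead use maps from $X_0(D,N)/W$ to elliptic curves of rank $0$, or combine a partial Chabauty bound with the Mordell--Weil sieve to eliminate the residual cosets of rational points modulo the images of known points.

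For the final assertion I restrict to $N = 1$, where the rational points to be classified are the CM points, i.e.\ images of points whose underlying abelian surface acquires multiplication by an imaginary quadratic order. By the theory of CM points on Shimura curves, the admissible discriminants are exactly those of imaginary quadratic orders that embed optimally into the quaternion order of discriminant $D$, and this is a finite, explicitly computable list. For each admissible order I would locate the corresponding CM point on the Guo--Yang model, either from coordinates already recorded by Guo--Yang or by computing the relevant special values, push it forward to the quotient, and compare it against the rational points found in the previous step; matching coordinates then determines exactly which rational points are CM.

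The main obstacle will be the provable determination of the rational point set on the genus $\geq 2$ quotients for which the Mordell--Weil rank equals or exceeds the genus, since ordinary Chabauty--Coleman does not apply there. In those cases the success of the Mordell--Weil sieve hinges on finding a workable combination of auxiliary primes together with maps to lower-genus quotients, and in the hardest instances one may need quadratic Chabauty. A secondary difficulty is guaranteeing that the CM classification is complete: one must verify that no CM point has been missed, which requires the full enumeration of admissible imaginary quadratic orders and careful attention to fields of definition, since a $\Qbar$-rational CM point need not descend to a $\Q$-rational point of the quotient.
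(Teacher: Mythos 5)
Your first two steps (computing the quotient via $W$-invariant functions on the hyperelliptic model, then determining rational points genus-by-genus with Mordell--Weil computations, Chabauty--Coleman, maps to rank-$0$ elliptic curves, and sieve/quadratic-Chabauty fallbacks) are essentially the paper's approach. The paper additionally uses local non-solvability and two-cover descent to prove emptiness (several quotients turn out to violate the Hasse principle, so everywhere-local solvability plus descent, not just Chabauty, is what settles them), and it disposes of the two genus-$2$ quotients with $r=g=2$ not by running quadratic Chabauty but by an exceptional isomorphism $X_0(93,1)/\la \w_{93}\ra \simeq X_0(1,93)/\la \w_3,\w_{31}\ra$ (points known from the literature) and by citing a forthcoming computation for the bielliptic curve $X_0(10,19)/\la \w_{190}\ra$.

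The CM step, however, has a genuine gap. First, your finiteness claim is false as stated: infinitely many imaginary quadratic orders embed optimally into a maximal order of the quaternion algebra of discriminant $D$ (for instance every order in a field in which all primes dividing $D$ are inert), so $X_0(D,1)$ carries infinitely many CM points and your enumeration of ``admissible discriminants'' never terminates. What makes the problem finite is not embeddability but the rationality constraint on the image: by the Gonz\'alez--Rotger theory invoked in Theorem \ref{CMtheorem}, a CM point $P$ for an order $R$ satisfies $|H_R : \Q(P)| \le 2$ with $H_R$ the ring class field, and for $\pi_m(P)$ to be $\Q$-rational one needs $\Q(P)$ to be imaginary quadratic, forcing $h_R \le 2$; only this class-field-theoretic input cuts the candidates down to the finite list of CM orders of class number at most $2$. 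Second, your plan to locate CM points ``by computing the relevant special values'' runs into exactly the difficulty the paper is organized around: Shimura curves have no cusps and hence no Fourier expansions, so evaluating the uniformization at CM points is itself a hard problem, and Guo--Yang record only the few CM coordinates needed to pin down their models. The paper never computes CM coordinates at all; instead it computes the field of definition $F_Q = \Q(\pi_m^{-1}(Q))$ of the pullback of each known rational point, counts the rational CM points for each admissible order by the effective formula of Theorem \ref{CMtheorem}, and verifies that the containment $\{\, \pi_m(\CM(S))(\Q) \mid S \subset H_R \,\} \subseteq \{\, Q \mid F_Q \subset H_R \,\}$ is an equality, which identifies the CM points by counting rather than by coordinates. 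Note finally that even this method cannot always pin down the CM order or field itself (see Example \ref{CM?}, where distinct orders have nested ring class fields), so your expectation that matching coordinates ``determines exactly which rational points are CM'' and for which order promises more than is achievable.
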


In total there are $44$ geometrically hyperelliptic Shimura curves $X_0(D,N)$ and $380$ Atkin-Lehner quotients.  The curves and their rational points are organized into a database which can be accessed online\footnote{\url{https://github.com/ciaran-schembri}}. In the appendix we include information about the data that has been computed, including the genus and the number of rational points for each curve; and for curves of level one we display the rational points and whether they are CM as appropriate.

The remainder of the paper is organized as follows: in \S\ref{ShimuraCurves} we give a background on Shimura curves and their quotients,  including a brief survey on defining equations in the literature; in \S\ref{Computations} we detail how we carried out the computations and in \S\ref{Examples} we give some examples that might be of interest including some violations of the Hasse principle.

\subsection{Acknowledgements} We thank John Voight for many useful conversations and help with computing CM points. We also thank Jacob Swenberg for helping us identify issues with certain CM orders. We would also like to thank Jennifer Balakrishnan, Pete L. Clark, Stevan Gajovi\'c, Steffen M\"uller and Frederick Saia for very helpful discussions. The authors would also like to thank Sam Schiavone and Juanita Duque-Rosero for helping us organize our data into the LMFDB. The first named author was supported by NSF grant DMS-1945452 and  Simons Foundation grant \#550023 and the second named author was supported by a Simons Collaboration grant \#550029. We thank the referee for carefully reading our paper and suggesting many valuable improvements.

 \section{Shimura curves}\label{ShimuraCurves}
 
 \subsection{Background}\label{Background}
 For a background on quaternion algebras we refer the reader to \cite{VoightQA}.
 A quaternion algebra $B$ defined over $\Q$ is a central simple $\Q$-algebra of dimension $4$.  Equivalently,  $B$ is a quaternion algebra defined over $\Q$ if and only if there are elements $i,j \in B$ such that
 $$B \simeq \Q \cdot 1+ \Q \cdot i + \Q \cdot j + \Q \cdot ij$$
 and $i^2,j^2 \in \Q^\times$ with $ij = -ji.$
 Note that $M_2(\Q)$ is an example of a quaternion algebra.  An order of $B$ is a full rank $\Z$-lattice which is also a subring.
 
 For any place $v$ of $\Q$,  we can consider the completion $B\otimes_{\Q} {\Q}_{v}$ at $v$ and $B$ is said to ramified at $v$ if the completion at $v$ is a division algebra.  The set of ramified places of $B$ is finite,  of even cardinality and uniquely determines the isomorphism class of a quaternion algebra.  The discriminant $D$ of $B$ is the product of the finite ramified places of $B$.  We say that $B$ is indefinite if $B$ is unramified at the archimedean place.

Throughout $B$ will be an indefinite quaternion algebra defined over the rationals of discriminant $D$ and $O$ will denote a maximal order.  There are embeddings 
\begin{align*}
\iota_\infty &: B \hookrightarrow M_2(\R) \simeq B \otimes_{\Q} \R \\
\iota_p &: O \hookrightarrow M_2(\Z_p) \simeq O \otimes_{\Z} \Z_p
\end{align*}
where $p$ is any integer prime not dividing $D$.  As with modular curves we have a notion of level: Eichler orders of level $N$ are defined to be orders of the form
$$O_N  := \{ \ x \in O \ | \ \iota_p(x) \equiv \begin{pmatrix} \ast & \ast  \\ 0 & \ast \end{pmatrix} \ \text{mod} \ p^e \ \text{for all} \ p^e ||N \ \}.$$

There is an action of the norm 1 elements of $O_N$ via $\iota_\infty (O_N^1) / \{ \pm 1 \} \leq \PSL_2(\R)$ on the upper half plane 
$$\HH = \{ \ x+iy \ | \ y>0 \ \}$$
by
$$ \gamma \cdot z := \iota_\infty(\gamma) \cdot z = \begin{pmatrix} a & b \\ c& d \end{pmatrix} \cdot z = \frac{az+b}{cz+d}.$$

Consider the Riemann surface
$$\iota_\infty (O_N^1) / \{ \pm 1 \} \backslash \HH.$$
If $B$ has discriminant 1 then this Riemann surface is the classical modular curve $X_0(N).$

From now on we shall assume that $B$ is not isomorphic to $M_2(\Q)$, or equivalently that $D>1.$  By work of Shimura (\cite[Main Theorem I]{Shimura67}, \cite[Theorem 2.5]{Shimura70}) there is an algebraic curve which we denote $X_0(D,N),$ such that there is an open immersion of Riemann surfaces $\iota_\infty (O_N^1) / \{ \pm 1 \} \backslash \HH \hookrightarrow X_0(D,N)$ which is a biregular isomorphism.  The curve $X_0(D,N)$ has a canonical model defined over $\Q$ \cite[\S3]{Shimura67},  it is called a Shimura curve and it has level $N$.  

As with modular curves,  Shimura curves arize naturally as a moduli problem.  The curve $X_0(D,N)$ parameterizes pairs $(A,\iota)$ where 
\begin{itemize}
\item $A$ is an abelian surface;
\item $\iota : O_N \hookrightarrow \End(A)$ is an embedding.
\end{itemize}
More precisely,  in the category of $\Q$-schemes, $S\longmapsto \{(A,\iota)_{/S}\}$ is coarsely represented by the curve $X_0(D,N)$ \cite{Deligne71}.

We call such a pair $(A,\iota)$ a QM-surface.  It is well known that Shimura curves have no real points (\cite[Theorem 0]{Shimura75}, \cite[\S3]{Ogg85}),  i.e.
$$X_0(D,1)(\R) = \emptyset.$$
A consequence of this fact is that there are no abelian surfaces defined over $\R$ such that there is an embedding $\iota : O_N \hookrightarrow \End(A)$ also defined over $\R$.
So in contrast with modular curves,  it is not possible to consider their rational points; rather,  we must study the rational points on quotients of these Shimura curves, which we now define.

Consider the normalizer group
$$N_{B_{>0}^\times}(O_N^1) := \{ \ \alpha \in B_{>0}^\times \ | \ \alpha^{-1} O_N^1 \alpha = O_N^1 \ \},$$
where $B_{>0}^\times$ are the units in $B$ of positive norm.  Elements of $N_{B_{>0}^\times}(O_N^1)$ naturally define an automorphism of the Shimura curve $X_0(D,N)$,  with the scalars $\Q^\times$ acting trivially.  It is straightforward to check that two elements $\alpha, \beta$ of $N_{B_{>0}^\times}(O_N^1)$ define the same automorphism if and only if $\alpha \beta^{-1} \in \Q^\times O_N^1.$ We arrive at the definition of the group of Atkin-Lehner involutions:
$$W(D,N):= N_{B_{>0}^\times}(O_N^1) / \Q^\times O_N^1.$$

There is an identification $W(D,N) \simeq (\Z/2\Z)^{\# \{ p  | DN \ :  \ p \text{ prime}\}}$ and we will write $W(D, N) = \{ \ \w_m \ | \ m>0,  \ m|ND, \  \gcd(m,ND/m) = 1 \ \}$ \cite[Ch.  28]{VoightQA}. Note that $\w_m \cdot \w_n = \w_{mn/\gcd(m,n)^2}$.

Now for any subgroup $W \leq W(D,N)$ of involutions we can consider the Atkin-Lehner quotient 
$$X_0(D,N)/W.$$
It is possible for these quotient curves to have rational points.

The group $W$ also acts on the set of embeddings $\iota : O_N \hookrightarrow \End(A)$ and the curve $X_0(D,N)/W$ parameterizes surfaces $A$ up to identifying embeddings under the action of $W$.  The quotient by the full group parameterizes surfaces and `forgets' the embedding. For a detailed discussion of the modular interpretation of the Atkin-Lehner group see \cite[\S3.1]{Rotger04mod}.

For a totally complex field $K$, Jordan showed that a $K$-rational point on $X_0(D,1)$ represents a QM-surface defined over $K$ if and only if $K$ splits the quaternion algebra $B$, i.e. $B \otimes_{\Q} K \simeq M_2(K)$ \cite[Theorem 1.1]{Jordan86}.  It is also not necessarily the case that a rational point on an Atkin-Lehner quotient gives rise to an abelian surface defined over the rationals (cf. \cite[\S4]{BFGR06}).

We conclude this subsection with a description of certain special points on Shimura curves and their quotients called CM points.  Let $K = \Q(\sqrt{-d})$ be an imaginary quadratic field which admits an embedding $q : K \hookrightarrow B.$  Let $R \subset K$ be an order.  Then $R$ is said to be optimally embedded in $O_N$ if $q(K) \cap O_N = q(R).$

For an optimally embedded order $R$ there is exactly one fixed point $z$ of $\iota_\infty (O_N^1) / \{ \pm 1 \} \backslash \HH$ under the action of $\iota_{\infty}( q(R)).$ We say that $z$ is a CM point for $R$.  The set 
$$\CM(R) := \{ \ z \in \iota_\infty (O_N^1) / \{ \pm 1 \} \backslash \HH \ | \ \iota_{\infty}( q(R)) \cdot z = z, \ q : R \hookrightarrow B \ \text{an optimal embedding} \ \}$$
is the set of CM points for $R$. We identify the CM points on $\iota_\infty (O_N^1) / \{ \pm 1 \} \backslash \HH$ with the corresponding points on $X_0(D,N)$.  

There is the natural quotient map $\pi_W : X_0(D,N) \longrightarrow X_0(D,N)/W$ which we shall denote $\pi_m$ if $W = \la \w_m \ra.$ We say a point $Q \in X_0(D,N)/W$ is a CM point if any of its preimages on $X_0(D,N)$ is CM. 

\begin{remark}
CM points on $X_0(D,N)$ correspond to abelian surfaces with extra endomorphisms by the CM field K: $\End(A) \otimes_\Z \Q \simeq M_2(K).$ In fact, an alternative definition of a CM point for $R$ on $X_0(D,N)$ is a point in the fibre over $[(A,\iota)] \in X_0(D,1)$ where $\End(A) \otimes_\Z \Q \simeq M_2(K)$ and the QM-equivariant endomorphisms are equal to $R$ (cf. \cite{Clark03}).
\end{remark}

A special case of the Coleman conjecture (\cite[p.1]{BFGR06}) predicts that there are only finitely possibilities for the geometric endomorphism ring of an abelian surface $A/\Q.$ An abelian surface $A/\Q$ such that $\End(A_\Q) \otimes_\Z \Q \simeq \Q(\sqrt{m})$ and $\End(A_{\Qbar}) \otimes_\Z \Q \simeq B$ will give rise to a rational non-CM point on $X_0(D,1)/\la \w_m \ra,$ thus studying the rational points on Atkin-Lehner quotients can be seen as having applications to the Coleman conjecture.

\subsection{Defining Equations}\label{DefiningEquations}

In this subsection we give a brief survey of defining equations of Shimura curves in the literature. Prior to the work of Gonz\'{a}lez-Rotger \cite{GR04}, the only defining equations of Shimura curves that were known had genus $0$ or $1$ \cite{Elkies98,JL85,Kurihara79}. Kurihara subsequently conjectured equations of all Shimura curves of genus 2 and many of genus $3$ and $5$ but was unable to prove them \cite{Kurihara94}. There are exactly three Shimura curves $X_0(D,1)$ of genus 2 given by $D=26,38,58.$ In \cite{GR04} the authors proved that the equations for these three genus $2$ curves which Kurihara conjectured are correct. They also gave defining equations for all genus 2 Atkin-Lehner quotients $X_0(D,1)/\langle \w_m \rangle$ which are bielliptic, of which there are $10$. These are for the pairs
\begin{align*}(D,m) \in \{ &(91,91),(123,123),(141,141),(142,2),(155,155),\\ &(158,158),(254,254),(326,326),(446,446) \}.\end{align*} A complete set of rational points on the $10$ bielliptic curves $X_0(D,1)/\langle \w_m \rangle$ of genus $2$ is determined by \cite[Table 3]{BFGR06}. All have a non-empty set of rational points except $X_0(142,1)/\la \w_2 \ra$ which has no rational points. There are $29$ more curves $X_0(D,1)/\langle \w_m \rangle$ of genus 2 and the pairs $(D,m)$ are given by \cite[Lemma 4.1]{GR04}. There are $32$ values of $D$ for which $X_0(D,1)$ is bielliptic \cite[Theorem 7]{Rotger02}.

 There are $24$ values of $D$ for which $X_0(D,1)$ is hyperelliptic over $\Qbar$ and $21$ of these are hyperelliptic over $\Q$ \cite{Ogg83}.  These are organized by genus into the following table.
 
 \begin{table}[H]
 \begin{center}
\begin{tabular}{ c | c  }
genus & discriminant $D$   \\ 
2 & 26, 38, 58 \\
 3 & 35, 39, 51, 55, 57, 62, 69,  82, 94  \\
 4 &  74, 86 \\
 5 & 87, 93\\
 6 & 134\\
 7 & 95, 146, 111\\
 8 & \\
 9 & 119, 159, 194, 206 \\   
\end{tabular}
\caption{The Shimura curves $X_0(D,1)$ which are hyperelliptic over $\Qbar$. All are hyperelliptic over $\Q$ except $D=57,82$ and 93}
\end{center}
\end{table}

In \cite{Molina12} the author computes equations for $X_0(D,1)$ for $D=39$ and 55 which both have genus 3. Molina also gives equations for some Atkin-Lehner quotients of Shimura curves $X_0(D,1)/ \langle \w_m \rangle$ which are genus 2, these are for the pairs \cite[Table 2]{Molina12}
 $$(D,m) \in \{ (35,5), (51,17),(57,3),(65,13),(65,5),(69,23),(85,5),(85,85)\}.$$

 In a subsequent work \cite{GM16} the authors give a complete list of hyperelliptic genus $2$ curves $X_0(D,1)/\langle \w_m \rangle$ such that $X_0(D,1)$ is hyperelliptic of genus $3$ \cite[Table 4]{GM16} along with equations for these. 
 
 Guo-Yang give a complete list of Shimura curves $X_0(D,N)$ which are hyperelliptic over $\Qbar$  \cite[Appendix A]{GY17} and also include information about what the Atkin-Lehner involutions are for each curve.  There are $44$ such curves.  Note that every quotient curve $X_0(D,N)/\langle \w_m \rangle$ in \cite{Molina12, GM16} is the quotient of a curve in \cite{GY17} except for $X_0(65,1)/ \langle \w_{13} \rangle, \ X_0(65,1)/ \langle \w_{5} \rangle, \ X_0(85,1)/\la \w_{5} \ra$ and $X_0(85,1)/\la \w_{85} \ra$ which is because $X_0(65,1)$ and $X_0(85,1)$  are not geometrically hyperelliptic.

  \section{Computations}\label{Computations}
  
  In this section we shall give details about what data we computed and how.   We used the program MAGMA \cite{MAGMA}.  We have created a database of Shimura curves and their quotients which has a unique identifier (given by $D,N$ and $W$) for each curve and can be accessed online (see \S\ref{Appendix}), with the objects being MAGMA-readable.  The database contains all possible curve quotients of the form $X_0(D,N)/W$ such that $X_0(D,N)$ is geometrically hyperelliptic or is genus $0$ or $1$.  Equations for the genus $0$ and $1$ curves along with the number of rational points on the quotients already appear in the literature \cite[Theorem 5.2]{BFGR06}.  The geometrically hyperelliptic curves are from \cite[Appendix A]{GY17}.  The initial data we start with are defining equations for $X_0(D,N)$ and generators of $W(D,N)$. 
  
  There is a variety of features we would like to compute about the quotient curves $X_0(D,N)/W,$ first though we must have equations defining the curve.  Given a subgroup $W$ of $\Aut_\Q(X_0(D,N))$ (or any curve more generally) it is usually possible to compute equations defining the quotient curve using the function \texttt{CurveQuotient()}.  This function will also compute defining equations for the map $X_0(D,N) \rightarrow X_0(D,N)/W.$ There are instances when the function fails to compute a quotient curve,  often this is when the quotient has genus 0 and no rational points.  In this case we were able to take advantage of the fact that $X_0(D,N)$ is geometrically hyperelliptic,  so we can take the quotient by a maximal subgroup of $W$ not containing the hyperelliptic involution,  and then compose with the function \texttt{IsGeometricallyHyperelliptic()}.  This is possible because the quotient of a hyperelliptic curve is also hyperelliptic \cite[Proposition A.1.(vii)]{Poonen07}.

Once we have defining equations for $X_0(D,N)/W$,  it is possible to compute further information about the curve such as the set of rational points $(X_0(D,N)/W)(\Q)$ and which points are  CM as explained in \S\ref{CMpoints}.

  \subsection{Computing rational points}
  
  Given a curve $X/\Q$ of arbitrary genus $g$ we would like to know two things:
  \begin{enumerate}
  \item Is $X(\Q)$ non-empty?
  \item If $X(\Q)$ is non-empty then what is $X(\Q)?$
  \end{enumerate}
  
  Both of these questions are deep and difficult to answer in general.  For a genus 0 curve $X(\Q)$ is either infinite or empty,  and if a rational point on the curve exists it is straightforward to explicitly parameterize all of the rational points.  For a genus 1 curve (1) is the same as asking whether $X$ is an elliptic curve. This is not as straightforward as the case of genus 0 but in practice can often be done. Then to know the structure of $X(\Q)$ in the case it is non-empty is to ask about the structure of the Mordell-Weil group. Whilst there is no algorithm to do this which is guaranteed to terminate,  it is a well-studied problem and we can call \texttt{MordellWeilGroup()} in MAGMA which is likely to succeed when the coefficients are small.      
  
If the genus of $X$ is greater than or equal to 2 then it is a theorem of Faltings \cite{Faltings83} that $X(\Q)$ is finite (also known as Mordell's conjecture).  The proof of this fact is not constructive and in practice there are many ways to attempt to answer questions (1) and (2), although no algorithm is guaranteed to work.  Recently, making $X(\Q)$ explicitly computable for certain higher genus curves has been a very active area of study,  with many breakthroughs (for example \cite{BDMTV19}).

The following is a brief survey of the techniques we used to answer questions (1) and (2) for the Atkin-Lehner quotients $X_0(D,N)/W.$

\noindent \textbf{Not everywhere locally solvable.} If a curve does not have any known rational points, one strategy to try is to check whether it has points everywhere locally. If it does not, then the curve has no $\Q$-rational points.  If the curve has genus $0$ or it has an equation of the form $y^2=f(x)$,  then one can easily compute the set $X(\A_f)$,  where $\A_f$ is the ring of finite adeles. For genus 0 and 1 curves we can use the Magma function \texttt{IsLocallySolvable()}, while higher genus curves are addressed using \texttt{HasPointsEverywhereLocally()}.

 \noindent \textbf{Two Cover Descent.} The Two Cover Descent method is particularly useful in showing that the set of $\Q$-rational points of a curve is empty.  Let $X/\Q$ be a non-singular hyperelliptic curve of genus $g$, given by the affine model
 $$X : y^2 = f_nx^n + \cdots + f_0 = f(x),$$
 
 where $f(x)$ is square-free. If $n$ is odd, then $X$ has a rational Weierstrass point, so in the case where the curve has no known rational points we can assume that $n = 2g+2$. We then consider the algebra 
 $$A = k[x]/(f(x))$$
 where we write $\theta$ for the image of $x$ in $A$, which means $f(\theta) = 0$. We consider the set
 
 $$H_{\Q} = \{ \ \delta \in A^\times/{A^{\times 2}}\Q^\times \ | \ \text{Nm}_{A/\Q}(\delta) \in f_n {\Q^\times}^2  \ \}.$$
 
 If the set $H_{\Q}$ is empty, one can conclude that $X(\Q)$ is empty.  This follows from the fact that one can define a map $X(\Q) \rightarrow H_{\Q}$. The details can be found in \cite[\S2]{BS09}. The Magma function  \texttt{TwoCoverDescent()} computes the set $H_{\Q}$.
  
   \noindent \textbf{Chabauty--Coleman method.} A special case of the Mordell conjecture had been proved by Chabauty in 1941 \cite{Chabauty41} when $\text{rank}(\Jac(X)) =r < g$. The $p$-adic integration techniques developed in the 1980s by Coleman \cite{Coleman85} succeed in bounding - and are often able to explicitly compute - the set of $\Q$-rational points. The strategy is particularly effective for curves of small genus. 

Firstly,  we can compute an upper bound on the number of rational points. The following theorem due to Stoll is a refinement of the original Chabauty-Coleman bound:

  \begin{theorem}[{\cite[Corollary~6.7]{Stoll06}}] 
Let $X/\Q$ be a nice curve of genus $g \geq 2$, $r$ be the rank of its Jacobian over $\Q$ and $p$ be a prime of good reduction for $X$. If $r < g$ and $p > 2r+2$, then
\[
    \#X(\Q) \leq \#X(\F_p) + 2r.
\]
\end{theorem}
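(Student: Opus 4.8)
The plan is to run the Chabauty--Coleman method and then sharpen the zero-counting residue disk by residue disk, following Stoll. First I would note that we may assume $X(\Q)\neq\emptyset$ (otherwise the bound is trivial), and fix a base point $b\in X(\Q)$, the good-reduction prime $p$, a smooth proper $\Z_p$-model of $X$, and the Abel--Jacobi map $Q\mapsto[Q-b]$ embedding $X(\Q_p)$ into $J(\Q_p)$, where $J=\Jac(X)$. Coleman's theory of $p$-adic integration pairs $J(\Q_p)$ with the $g$-dimensional space $H^0(X_{\Q_p},\Omega^1)$. Because $r<g$, the $p$-adic closure of $J(\Q)$ in $J(\Q_p)$ spans a subspace of dimension at most $r$, so the space of annihilating differentials
\[
V=\bigl\{\,\omega\in H^0(X_{\Q_p},\Omega^1)\ :\ \int_{\gamma}\omega=0\ \text{for all}\ \gamma\in J(\Q)\,\bigr\}
\]
has $\dim V\ge g-r\ge 1$. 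Every $Q\in X(\Q)$ satisfies $\int_b^Q\omega=0$ for all $\omega\in V$, so $X(\Q)$ lies in the common zero locus of the Coleman integrals of $V$.

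Next I would partition $X(\Q_p)$ into residue disks indexed by $X(\F_p)$ via reduction, and reduce an integral $\Z_p$-basis of the differentials in $V$ to a subspace $\bar V\subseteq H^0(X_{\F_p},\Omega^1)$ with $\dim\bar V=\dim V$. For each $\bar P\in X(\F_p)$ let $n_{\bar P}$ be the least order of vanishing at $\bar P$ among nonzero elements of $\bar V$; equivalently $n_{\bar P}$ is the multiplicity of $\bar P$ in the base divisor of the linear system $\bar V$. In the disk over $\bar P$ I would integrate a differential $\omega_{\bar P}\in V$ attaining this minimal order, writing $\omega_{\bar P}=\bigl(\sum_{i\ge0}a_it^i\bigr)dt$ in a local uniformizer $t$. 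Every $Q\in X(\Q)$ reducing to $\bar P$ is a zero of $\lambda=\int\omega_{\bar P}=\sum_{i\ge0}\frac{a_i}{i+1}t^{i+1}+\text{const}$, whose derivative $\omega_{\bar P}$ has reduction vanishing to order exactly $n_{\bar P}$; a $p$-adic Newton polygon estimate of Rolle type (a $p$-adic integral has at most one more zero on a disk than its derivative) then bounds the number of such points by $1+n_{\bar P}$. The hypothesis $p>2r+2$ is exactly what validates this count in every disk: it forces the denominators $i+1$ to be $p$-adic units across the relevant range, using $n_{\bar P}\le\sum_{\bar P}n_{\bar P}\le 2r<p-2$ as established below.

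Summing over all disks gives
\[
\#X(\Q)\le\sum_{\bar P\in X(\F_p)}\bigl(1+n_{\bar P}\bigr)=\#X(\F_p)+\sum_{\bar P}n_{\bar P},
\]
so it remains to prove $\sum_{\bar P}n_{\bar P}\le 2r$, which I regard as the decisive step. The quantity $\sum_{\bar P}n_{\bar P}$ is the degree of the base divisor $B$ of the linear system $\PP(\bar V)\subseteq|K|$, which has projective dimension $\dim\bar V-1\ge g-r-1$; hence its moving part $M$, with $M\sim K-B$ effective and therefore special, satisfies $\dim|M|\ge g-r-1$. Clifford's theorem applied to $M$ gives $g-r-1\le\tfrac12\deg M$, so $\deg M\ge 2(g-r-1)$ and
\[
\deg B=(2g-2)-\deg M\le (2g-2)-2(g-r-1)=2r.
\]
Combining the two displays yields $\#X(\Q)\le\#X(\F_p)+2r$. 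The main obstacle is precisely this refinement: the naive method, which integrates a single global differential, only controls the total vanishing by $\deg K=2g-2$ and so gives the weaker bound $\#X(\F_p)+2g-2$. Extracting the improvement to $2r$ requires choosing the optimal annihilating differential disk by disk, identifying $\sum_{\bar P}n_{\bar P}$ with the degree of a base divisor, and feeding it into Clifford's inequality, all while checking that $p>2r+2$ keeps every local zero count honest.
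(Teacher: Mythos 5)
The paper does not actually prove this statement---it quotes it directly from Stoll \cite{Stoll06} (with a pointer to \cite{MP12} for background)---and your argument is essentially a faithful reconstruction of Stoll's own proof: annihilating differentials from $r<g$, a disk-by-disk zero count using the minimal vanishing order $n_{\bar P}$ of the reduced subspace $\bar V$, and Clifford's theorem to bound the degree of the base divisor by $2r$. Your proposal is correct in substance, with only two harmless imprecisions: $\sum_{\bar P\in X(\F_p)} n_{\bar P}$ is bounded \emph{above} by $\deg B$ rather than equal to it (the base divisor may be supported at closed points of higher degree), and the Rolle-type bound of $1+n_{\bar P}$ zeros per disk is not automatic but requires exactly the hypothesis $n_{\bar P}<p-2$, which you do correctly reduce to $p>2r+2$ via the Clifford bound.
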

For further details see \cite{MP12}.

For curves of genus $2$ satisfying the Chabauty-Coleman condition $r < g$, there are two Magma functions, \texttt{Chabauty0()} and \texttt{Chabauty()}, which deal with $r=0$ and $r=1$, respectively.
For hyperelliptic curves with $g > 2$ and $r=0$ admitting an odd degree model one can use the algorithms developed by \cite{FFH21} to compute the $\Q$-rational points. Balakrishnan and Tuitman \cite{BT20} have implemented code to deal with genus $3$ curves satisfying the Chabauty-Coleman condition.  There are methods such as quadratic Chabauty that tackle the case $r=g$, which is significantly more difficult. It has been successfully used for curves with additional structure such as bielliptic genus $2$ curves \cite{BD18} and modular curves \cite{BDMTV19}.

\noindent  \textbf{Pullback of rational points.} Given a curve $X$ along with a group of automorphisms $W \leq \Aut(X)$ one can compute an explicit quotient map $\pi : X \longrightarrow X/W$.  If $P$ is a rational point on $X$ then $\pi(P)$ is rational on the quotient $X/W$. Thus, if we know $(X/W)(\Q)$ and this set is finite then we can pullback the points and $\pi^{-1}((X/W)(\Q))$ contains $X(\Q)$.  This can be particularly effective for Shimura curves since they have many quotient maps and the quotient curves will be of smaller genus,  often making it easier to compute their rational points.

\noindent \textbf{Genus $1$ Curves.} We can attempt to classify the rational points on a genus 1 curve; either by finding a rational point and computing the Mordell-Weil group or showing that the curve has no rational points.  Firstly,  if the curve has no points everywhere locally we can conclude that it has no rational points.  Else,  we reduce the size of the equations of the curve with \texttt{Reduce(Minimise(GenusOneModel()))} and search for rational points in a box.  If we find a point then it is possible to construct an isomorphism $\phi : X \longrightarrow E$ to an elliptic curve $E$ with a plane model.  Then we are able to compute the Mordell-Weil group of $E$ and in the case where the rank is 0 we can pull the rational torsion points back along $\phi$.  In our examples, either the curve had a rational point and we were able to compute the Mordell-Weil group of the Jacobian or was not locally solvable.

With a combination of the techniques described above we were able to provably compute the set of rational points on almost all of the Atkin-Lehner quotients in the database.  The only curves for which the above methods could not be applied were the genus $2$ curves
$$X_0(93,1)/\la \w_{93} \ra \ \ \ \ \ \ \text{and} \ \ \ \ \ \ X_0(10,19)/\la \w_{190} \ra.$$
Both have Jacobians with rank $2$ over $\Q$.  

It turns out that $X_0(93,1) /\la \w_{93} \ra$ is isomorphic to the modular curve $X_0(1,93)/\langle \w_3, \w_{31} \rangle,$ whose points have been computed in \cite[Table 1]{BGX21}.  The curve $X_0(10,19)/\langle \w_{190} \rangle$ is a genus $2$ bielliptic curve whose Jacobian is isogenous to a product of two rank $1$ elliptic curves. The computation of its set of $\Q$-rational points will appear in an upcoming work of Francesca Bianchi with the first named author \cite{BP22}.

\begin{remark}
We note some other exceptional isomorphisms between Shimura and modular curves of genus $2$: $X_0(91,1)/ \langle \w_{91} \ra \simeq X_0(1,91)/ \langle \w_{91} \rangle$ and $X_0(85,1)/\la \w_{85} \ra \simeq X_0(255)^*$.  These were kindly pointed out to us by Jennifer Balakrishnan and Nikola Adžaga respectively.  The rational points of $X_0(1,91)/ \langle \w_{91} \rangle$ are computed in \cite{BBBM21} and the rational points of $X_0(255)^*$ are computed in \cite{ACKP22}.  The isomorphism can be checked directly using the defining equations and the function \texttt{IsIsomorphic()}.
\end{remark}

\subsection{CM points}\label{CMpoints}

In this subsection we explain how in practice one can often use a combinatorial description to determine whether a rational point on the Atkin-Lehner quotient $X_0(D,N)/\la \w_m \ra$ is CM or not.

We carry forward the notation from \S\ref{Background} with the further assumption that $N$ is squarefree.  In addition,  $p$ will be an integer prime; $\Delta_R$ will be the discriminant of $R$; $f$ will be the conductor of $R$; $I(R)$ the group of fractional invertible ideals of $R$; $H_R$ the ring class field of $R$; $h_R$ the class number of $R$ and $\sigma_\fraka$ the element in $\Gal(H_R | K)$ under the Artin map.  Let $\big( \frac{K}{p} \big)$ be the Kronecker symbol and $\big( \frac{R}{p} \big) := \big( \frac{K}{p} \big)$ if $p \nmid f$ and 1 otherwise (sometimes called the Eichler symbol).

Below we collate information that can be used to count the number of CM points on quotient curves. 
We will make frequent use of the following quantities:
$$D(R) = \prod_{p |D, \ \big( \frac{R}{p} \big) = -1} p, \ \ \ N(R) = \prod_{p|N, \ \big( \frac{R}{p} \big) = 1} p, \ \ \ N^\ast(R) = \prod_{p|N,  \ p \nmid f, \big( \frac{R}{p} \big)=1} p.$$
Also, let $m_r = \gcd \left(m, \frac{DN}{D(R)N(R)}\right)$ and $\frakb$ be an invertible ideal of $R$ such that $\Nm(\frakb) = m_r$.

\begin{theorem}\label{CMtheorem}
\begin{enumerate}
\item The set $\CM(R)$ is nonempty if and only if $\frac{DN}{D(R)N^\ast(R)} | \Delta_R$ and in this case $\# \CM(R)  = 2^{ \# \{ p|D(R) N(R) \} } \cdot h(R).$
\item The set of fixed points of $X_0(D,N)$ under the involution $\w_m$ is
$$X_0(D,N)^{\la \w_m \ra} =
\begin{cases}
\CM(\Z[\sqrt{-1}]) \cup \CM(\Z[\sqrt{-2}]) &\text{if} \ m=2 \\
\CM(\Z[\sqrt{-m}]) \cup \CM\left(\Z\left[\frac{1+\sqrt{-m}}{2}\right]\right) &\text{if} \ m \equiv 3 \ \text{mod} \ 4 \\
\CM(\Z[\sqrt{-m}]) \ &\text{otherwise}.
\end{cases}
 $$
 \item Let $P\in \CM(R).$ \begin{enumerate}
 \item If $D(R)N^\ast(R) \neq 1$ then $\Q(P) = H_R$ and 
 $$\Q(\pi_m(P)) = 
 \begin{cases}
 H_R^{\sigma_b} &\text{if} \ m/m_r =1; \\
  H_R^{\sigma_{\frakb \fraka} \cdot c} \text{ where } \fraka \text{ satisfies } B \simeq \Big(\frac{-s,  D(R)N^\ast(R)\Nm(\fraka)}{\Q} \Big) &\text{if } m/m_r = D(R)N^\ast(R);  \\
  H_R &\text{otherwise.}
 \end{cases}$$
 \item If $D(R)N^\ast(R) = 1$ then $|H_R : \Q(P)| = 2$ and 
 $$\Q(\pi_m(P)) = 
 \begin{cases}
 H_R^{\la c \cdot \sigma_\fraka, \sigma_\frakb \ra} &\text{if} \ m/m_r = 1, \\
 H_R^{c \cdot \sigma_\fraka} &\text{otherwise},
 \end{cases}$$
 where $B \simeq \Big(\frac{-s,  \Nm(\fraka)}{\Q} \Big).$
 \end{enumerate}
\end{enumerate}
\end{theorem}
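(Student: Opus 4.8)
The plan is to handle the three parts by their natural methods: parts (1) and (2) via Eichler's theory of optimal embeddings, and part (3) via Shimura's theory of complex multiplication. Throughout, the governing dictionary is the one recalled in \S\ref{Background}: a CM point for $R$ corresponds to an $O_N^1$-conjugacy class of optimal embeddings $q : R \hookrightarrow O_N$, so that both counting CM points and tracking their fields of definition reduce to counting embeddings and understanding the Galois action on them.

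First I would establish part (1) by the local--global principle for embedding numbers, as in \cite[Ch.~30]{VoightQA}. The number of optimal embeddings of $R$ into $O_N$ up to $O_N^1$-conjugacy factors as $h(R)$ times a product of local embedding numbers over the primes $p \mid DN$. At a ramified prime $p \mid D$ the local order is maximal in the division algebra and the local embedding number is $1 - \big(\frac{R}{p}\big)$, which vanishes when $p$ splits in $K$, equals $1$ when $p$ ramifies, and equals $2$ when $p$ is inert; at a prime $p \mid N$, where $O_N$ is locally Eichler, it is $1 + \big(\frac{R}{p}\big)$, vanishing when $p$ is inert and equal to $2$ when $p$ splits with $p \nmid f$. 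Vanishing at the split primes of $D$ and the inert primes of $N$ is exactly the divisibility $\frac{DN}{D(R)N^\ast(R)} \mid \Delta_R$, while the primes contributing a factor of $2$ are precisely those of $D(R)$ (inert, dividing $D$) and $N(R)$ (split, dividing $N$). Multiplying by $h(R)$ yields both the nonemptiness criterion and the count $2^{\#\{p \mid D(R)N(R)\}}\, h(R)$.

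Next, for part (2), a point $z$ is fixed by $\w_m$ if and only if some representative $\mu \in N_{B_{>0}^\times}(O_N^1)$ of $\w_m$ fixes $z$ under $\iota_\infty$. Such a $\mu$ has reduced norm in $m\,\Q^{\times 2}$ and, after scaling, may be taken with $\trd(\mu) = 0$, so that $\mu^2 = -m$ and $\Q(\mu) \cong \Q(\sqrt{-m})$; the point $z$ is then the CM point of an optimal embedding of an order of $\Q(\sqrt{-m})$ containing $\mu$. Classifying the admissible orders by $m$ modulo $4$ produces the three cases, with $m \equiv 3 \pmod 4$ yielding both $\Z[\sqrt{-m}]$ and its overorder $\Z\!\left[\tfrac{1+\sqrt{-m}}{2}\right]$, and $m = 2$ forcing the separate appearance of $\Z[\sqrt{-1}]$ alongside $\Z[\sqrt{-2}]$ because of the $2$-adic behaviour of the normalizer. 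I would confirm which orders actually occur using the local embedding analysis of part (1).

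The hardest part is (3), which I would deduce from Shimura's reciprocity law for quaternionic CM points, following \cite{Jordan86} and the formalism of \S\ref{Background}. By CM theory the set $\CM(R)$ is a torsor under $\Gal(H_R \mid K)$ via the class-group action $\fraka \mapsto \sigma_\fraka$, so $\Q(P) = H_R$ exactly when no nontrivial Galois element fixes $P$; the index-$2$ drop when $D(R)N^\ast(R) = 1$ reflects the point being stabilized by a reflection arising from complex conjugation $c$. To compute $\Q(\pi_m(P))$ I would describe the action of $\w_m$ on $\CM(R)$: modulo the class-group action it is realized by multiplication by the ideal $\frakb$ of norm $m_r$, combined with a complex conjugation $c$ precisely when $m/m_r = D(R)N^\ast(R)$, at which point the relevant embedding is pinned down by the presentation $B \simeq \big(\frac{-s,\, D(R)N^\ast(R)\Nm(\fraka)}{\Q}\big)$. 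The field $\Q(\pi_m(P))$ is then the fixed field of the subgroup of $\Gal(H_R \mid K)$ stabilizing the $\w_m$-orbit of $P$, and the cases in (a) and (b) record the trichotomy $m/m_r \in \{1,\, D(R)N^\ast(R),\, \text{other}\}$ together with the presence or absence of the conjugation $c$. The main obstacle is the simultaneous bookkeeping of the Atkin-Lehner action, the Galois action, and complex conjugation: in particular, justifying the precise ideal $\fraka$ selected by the Hilbert-symbol presentation of $B$ and verifying that the resulting stabilizers are exactly those stated.
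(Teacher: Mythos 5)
The first thing to note is that the paper itself contains no proof of Theorem \ref{CMtheorem}: its entire argument is the citation ``See \cite[\S5.1]{GR06} and the references therein,'' plus a footnote correcting a typo in that reference. So your attempt can only be measured against the underlying literature (Eichler's theory of optimal embeddings, the fixed-point analysis going back to Ogg, and Shimura's reciprocity law as packaged by Gonz\'alez--Rotger), and at the level of architecture your three-part plan is indeed that theory. However, as a proof it has genuine gaps, two of which are internal inconsistencies with the very statement being proved.

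First, in part (2) you claim a representative $\mu$ of $\w_m$ fixing $z$ ``may be taken with $\trd(\mu)=0$, so that $\mu^2=-m$.'' This is false, and the failure is exactly where the statement is delicate: if it were true, every fixed point of $\w_2$ would be CM by an order of $\Q(\sqrt{-2})$, whereas $\Z[\sqrt{-1}]$ lies in a \emph{different} imaginary quadratic field and is not an overorder of $\Z[\sqrt{-2}]$, so the case $m=2$ could never be recovered from your reduction. The correct analysis keeps the trace: one needs $\mu \in O_N$ with $\nrd(\mu)=m$ and $\mu^2 = \trd(\mu)\mu - m \in \Q^\times O_N^1$, and a short norm computation using that $m$ is squarefree shows the only possibilities are $\trd(\mu)=0$, or $m=2$ with $\trd(\mu)=\pm 2$ (then $\mu\mp 1 \in O_N^1$ squares to $-1$, producing precisely the $\Z[\sqrt{-1}]$ points), or $m=3$ with $\trd(\mu)=\pm 3$ (producing points already accounted for by the maximal order of $\Q(\sqrt{-3})$). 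Your appeal to ``the $2$-adic behaviour of the normalizer'' hides exactly the computation that creates the exceptional case. Second, in part (3) you assert that $\CM(R)$ is a torsor under $\Gal(H_R \mid K)$; this contradicts your own part (1), since $\#\CM(R) = 2^{\#\{p \mid D(R)N(R)\}} h_R$ exceeds $|\Gal(H_R \mid K)| = h_R$ whenever $D(R)N(R) \neq 1$, which is precisely the situation of (3)(a). The Galois action is free but not transitive: the orbits are permuted by the involutions $\w_p$ for $p \mid D(R)N(R)$, and the entire content of part (3) is deciding, via Shimura reciprocity, which element of $\Gal(H_R \mid \Q)$ --- possibly involving complex conjugation $c$ --- induces $\w_m$ on the orbit of $P$; this is what produces the trichotomy $m/m_r \in \{1, D(R)N^\ast(R), \text{other}\}$ and pins down the ideals $\fraka$ and $\frakb$. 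Since your outline starts from a false transitivity claim and then explicitly defers this bookkeeping as ``the main obstacle,'' part (3) --- the part the paper actually relies on to identify CM points --- remains unproved.
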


\begin{proof}
See \cite[\S5.1]{GR06} and the references therein.\footnote{There is a typo in the paper which misses the term $D(R)N^\ast(R)$ from the quaternion algebra in case 2 of (3)(a).}
\end{proof}

\begin{corollary}
For the Atkin-Lehner quotient $X_0(D,N)/\la \w_m \ra$, the number of $\Q$-rational CM points $\#\pi_m(CM(R))(\Q)$ is effectively computable.
\end{corollary}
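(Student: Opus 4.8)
The plan is to reduce the count to a finite computation inside the ring class field $H_R$, with Theorem \ref{CMtheorem} as the only nontrivial input. First I would record that by part (1) the set $\CM(R)$ is finite, with $\#\CM(R) = 2^{\#\{p \mid D(R)N(R)\}} h(R)$, and is empty unless $\frac{DN}{D(R)N^\ast(R)} \mid \Delta_R$; in the empty case the count is $0$. Since the quotient map $\pi_m : X_0(D,N) \to X_0(D,N)/\langle \w_m \rangle$ is defined over $\Q$, the image $\pi_m(\CM(R))$ is a finite, $\Gal_\Q$-stable subset of the quotient curve, and a point of it is $\Q$-rational exactly when it is fixed by $\Gal_\Q$, equivalently when its field of definition is $\Q$. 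So the whole problem is to decide, for each of the finitely many $P \in \CM(R)$, whether $\Q(\pi_m(P)) = \Q$.

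Next I would compute $\Q(\pi_m(P))$ for each $P$ using part (3) of Theorem \ref{CMtheorem}. The structural fact that makes this effective is that $\Gal(H_R \mid \Q)$ is a generalized dihedral group: it is an extension of $\Gal(K \mid \Q) = \langle c \rangle$ by $\Gal(H_R \mid K) \cong \Cl(R)$, with $c$ acting by inversion. Both the class group $\Cl(R)$ and the Artin images $\sigma_\fraka, \sigma_\frakb$ appearing in the theorem are computable, as is the ideal $\fraka$ singled out by the splitting condition $B \simeq \big(\frac{-s,\, D(R)N^\ast(R)\Nm(\fraka)}{\Q}\big)$. Each field $\Q(\pi_m(P))$ is thereby presented as the fixed field of an explicit subgroup $H \leq \Gal(H_R \mid \Q)$ (namely $\langle \sigma_\frakb \rangle$, $\langle \sigma_{\frakb\fraka} \cdot c \rangle$, $\langle c\cdot\sigma_\fraka, \sigma_\frakb \rangle$, or $\langle c\cdot\sigma_\fraka \rangle$, according to whether $D(R)N^\ast(R) = 1$ and to the value of $m/m_r$), and the condition $\Q(\pi_m(P)) = \Q$ becomes the finite, decidable group-theoretic condition $H = \Gal(H_R \mid \Q)$.

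Finally I would convert the resulting yes/no answers into a count of points rather than of preimages. Because the fibres of $\pi_m$ have size at most $2$, the $\Gal_\Q$-orbit of any $P$ with $\Q(\pi_m(P)) = \Q$ is contained in $\{P, \w_m P\}$, so distinct $\Q$-rational points of $\pi_m(\CM(R))$ correspond to distinct such orbits; these can be tallied directly from the field-of-definition data, with part (2) of Theorem \ref{CMtheorem} identifying the $P$ that are ramification points of $\pi_m$ (fixed by $\w_m$) and hence have singleton fibres. I expect the main obstacle to be bookkeeping rather than theory: one must correctly match each fixed point and each optimal embedding to the right order, compute the ring class field data, and attach the generators $\sigma_\fraka, \sigma_\frakb, c$ to the correct subgroup in each branch of the formula. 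None of these steps is obstructed in principle, so $\#\pi_m(\CM(R))(\Q)$ is effectively computable.
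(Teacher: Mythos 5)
Your proposal is correct and takes essentially the same route as the paper: the paper states this corollary as an immediate consequence of Theorem \ref{CMtheorem}, which is exactly the deduction you spell out — finiteness and the emptiness criterion from part (1), the fields $\Q(\pi_m(P))$ from part (3) realized as fixed fields of computable subgroups of the generalized dihedral group $\Gal(H_R \mid \Q)$, and the fibre/fixed-point bookkeeping via part (2). This is also precisely how the count is carried out in practice in the paper's examples, such as the determination of the rational CM point on $X_0(38,1)/\la \w_{19} \ra$.
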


Suppose $X_0(D,N)/\la \w_m \ra(\Q)$ is finite and non-empty.  Given a knowledge of the complete set of points on the Atkin-Lehner quotient, we wish to determine which of these points are $\CM$. The information we have about a rational point $Q \in X_0(D,N)/\la \w_m \ra(\Q)$ is the field of definition of the pullback, denoted $F_Q = \Q(\pi_m^{-1}(Q)).$ 
Fix a class field $H_R$.  There is a containment
$$\{ \ \pi_m(\CM(S))(\Q) \ | \ S \subset H_R \ \} \subseteq \{ \ Q \ | \ F_Q \subset H_R \ \}.$$
If these two sets are equal then we can conclude that every rational point in $\{ \ Q \ | \ F_Q \subset H_R \ \}$ is a CM point for one of the orders such that $S \subset H_R$. 

Note that if the two sets above are not equal it would not be possible to determine which points are CM using this method, however,  in our case the two sets were always equal.  Whilst we were able to find out which points are CM, it is not always possible to identify the order $R$ associated to each CM point.  For example, the two orders $\Z[\sqrt{-1}]$ and $\Z[2\sqrt{-1}]$ have the same ring class field $H_R = \Q(\sqrt{-1}),$ also see Example \ref{CM?}.

The index $|H_R : \Q(P)|$ is at most 2,  and for $\pi_m(P)$ to be rational we require that $\Q(P)$ is imaginary quadratic,  hence we consider CM orders of class number at most 2. We give the complete list of these in the table below.

\begin{center}
\begin{table}[H]
\setlength\tabcolsep{2.5pt}
\begin{tabularx}{\textwidth}{c c c c c c c c c c c c c c c c c c c c c c}
-$\Delta_K$ & 3 &  3 &  3 &  3 &  3 &  3 &  4 &  4 &  4 &  4 &  4 &  7 &  7 &  7 &  8 &  8 &  8 &  11 &  11 &  15 &  15   \\
 \cline{1-22}
  \\[-1em]
$f$ & 1 &  2 &  3 &  4 &  5 &  7 &  1 &  2 &  3 &  4 &  5 &  1 &  2 &  4 &  1 &  2 &  3 &  1 &  3 &  1 &  2     \\
 \cline{1-22}
  \\[-1em]
$h_R$ & 1 & 1 & 1 & 2 & 2 & 2 & 1 & 1 & 2 & 2 & 2 & 1 & 1 & 2 & 1 & 2 & 2 & 1 & 2 & 2 & 2    \\
 \\[-1em]
 \\[-1em]
 \cline{2-22}
  \\[-1em]
  \cline{2-22}
   \\[-1em]
 \\[-1em]
 \\[-1em]
&  19 & 20 &  24 &  35 & 40 &  43 &  51 &  52 &  67 &  88 &  91 &  115 &  123 &  148 &  163 &  187 &  232 &  235 &  267 &  403 &  427 \\
   \cline{2-22}
  \\[-1em]
&  1  & 1 &  1 &  1 &   1 &  1 &  1 &  1 &  1 &  1 &  1 &  1 &  1 &  1 &  1 &  1 &  1 &  1 &  1 &  1 &  1 \\
     \cline{2-22}
  \\[-1em]
& 1 & 2 & 2 & 2 &   2 & 1 & 2 & 2 & 1 & 2 & 2 & 2 & 2 & 2 & 1 & 2 & 2 & 2 & 2 & 2 & 2 
\end{tabularx}
\caption{CM orders of class number at most 2. }
\end{table}
\end{center}

 \section{Examples}\label{Examples}

 \begin{example}
 The following curves are all of the violations of the Hasse principle that we found: they have points everywhere locally but no $\Q$-rational points. One can easily check that they have points everywhere locally using the Magma function \texttt{HasPointsEverywhereLocally()}, while the fact that they do have not any $\Q$-rational points can be shown using the Magma function \texttt{TwoCoverDescent()}.  Note that defining equations would not have been needed to know the local points on these curves since in \cite[Theorem 3.1]{RSY05} the authors give a complete description of whether $X_0(D,1)/\la  \w_m \ra$ has points everywhere locally for any involution $\w_m,$ which extends a result of Clark which shows that $X_0(D,1)/\la  \w_D \ra(\A_\Q) \neq \emptyset$ \cite[Main Theorem 2]{Clark03}.  
 
  \begin{align*}
  X_0(87,1)/\la \w_3 \ra \ &: \
y^2 + (x^3 + x + 1)y = -x^6 - 7x^4 + 16x^3 + 5x^2 - 16x - 4  \\
X_0(93,1)/ \la \w_3 \ra \ &: \
 y^2 + (x^3 + x^2 + 1)y = -x^8 + 2x^7 - 10x^6 - 29x^5 - 21x^4 - 35x^3 - 28x^2 - 9x - 13 \\
X_0(119,1)/ \la \w_7 \ra \ &: \ 
 y^2 + (x^5 + x^4 + x^2 + x + 1)y = -3x^{10} + 3x^9 + 5x^8 - x^7 - 17x^6 - 2x^5 
 \\&+ 18x^4 + 6x^3 - 12x^2 - 9x + 4  \\
X_0(6,29)/\la \w_6 \ra  \ &: \ 
 y^2 + (x^2 + x + 1)y = -64x^6 - 192x^5 + 155x^4 + 630x^3 - 210x^2 - 557x + 219  \\
X_0(6,37)/\la \w_3 \ra \ &: \
 y^2 + (x^3 + 1)y = -7x^6 - 27x^5 + 104x^3 - 27x - 7 \\
X_0(39,2)/ \la \w_{78} \ra \ &: \
 y^2 + (x^4 + x^3 + x^2 + x + 1)y = -2x^8 + 11x^7 - 28x^6 + 20x^5 - 5x^4 + 20x^3 
 \\&- 28x^2 + 11x - 2 
  \end{align*}

To our knowledge, these are new examples of Hasse principle violations except $X_0(93,1)/\la \w_{3} \ra$ which can be found in \cite[Table 3]{RV14} (though not with defining equations).  Examples of Shimura curves and their Atkin-Lehner quotients which violate the Hasse principle have been exhibited and studied in the literature.  For example,  if the genus of $X_0(D,1)/\la \w_D \ra$ is at least 2 then there are infinitely many quadratic twists of $(X_0(D,1),\w_D)$ that violate the Hasse principle \cite[Main Theorem]{CS18}, whilst in \cite[Theorem 3]{Clark08}, a family is given which has a positive density of quadratic twists violating the Hasse principle.

Furthermore, Jordan showed that the genus $3$ curve $X_0(39,1)(\Q(\sqrt{-13}))$ violates the Hasse principle \cite[Example 6.4]{Jordan86} and it was later shown that it is accounted for by the Manin obstruction \cite{SS03}. For an imaginary quadratic field $K$, in \cite{SY04} the authors give simple conditions to ensure that $X_0(D,N)(K)$ violates the Hasse principle and they show that these are also accounted for by the Manin obstruction.  Clark then shows that for a fixed $D>546$ there are infinitely many imaginary quadratic fields $K$ such that $X_0(D,1)(K)$ violates the Hasse principle \cite{Clark09}.

\end{example}

\begin{example}
There are very few rational non-CM points on the curves $X_0(D,1)/\la \w_m \ra$ in Table 3. In fact there are only $6$,  with $2$ on each of the genus $1$ curves $X_0(26,1)/\la \w_{13} \ra$,  $X_0(35,1)/\la \w_{7} \ra$ and $X_0(38,1)/\la \w_{19} \ra.$ Let us apply the contents of \S\ref{CMpoints} to find the CM points of the curve $X_0(38,1)/ \la w_{19} \ra(\Q)$.

Let $D=38$, $N=1$ and $m=19$. Then there exists an optimal embedding of an order $R$ in Table 2 if and only if $\frac{38}{D(R)} | \Delta_R$. This is satisfied for many orders $R$ and overall there are $108$ CM points on $X_0(38,1)$ for all such $R$.   

We require that $D(R) \neq 1$ since $38\nmid \Delta_R$ for any $R$.  Recall that $m_r := \gcd \left(19,  \frac{38}{D(R)}\right)$.  Now we can determine which of the many CM points on the genus $3$ curve $X_0(38,1)$ project to a rational CM point on the genus $1$ curve $X_0(38,1)/\la \w_{19} \ra.$ By Theorem \ref{CMtheorem}(3),  we are in the situation of part (a) and so $\Q(P) = H_R$ which forces $h_R=1.$ The only possibility for the fixed field $\Q(\pi_m(P))$ to be rational is if complex conjugation is acting, i.e.  we are in the case $m/m_r = D(R).$

This leaves us with 
$$\frac{19}{\gcd(19,38/D(R))} = 
\begin{cases}
1 &\text{if } D(R) = 2, \\
19 &\text{if } D(R) = 19 \text{ or } 38
\end{cases}$$
and we conclude that $D(R) = 19$. So we must have $\left(\frac{K}{2}\right)=1$ or $2|f$ and $\left(\frac{K}{19}\right)=-1$. This leaves us with the order $R= \Z[\sqrt{-1}]$. Note that the $R$-CM points are not fixed under Atkin-Lehner, so there is exactly one CM point on the quotient.  

We are in a position to identify which point is CM. The curves have defining equations
\begin{align*} X_0(38,1) &: y^2 = -16x^6 - 59x^4 - 82x^2 - 19 \\ 
X_0(38,1)/\la w_{19} \ra &: -X^3 + XYZ + Y^2Z - 9XZ^2 + YZ^2 - 90Z^3 = 0
\end{align*}
and the projection is defined by equations
$$\pi_{19} : (x:y:z) \longmapsto (-4x^2z - 5z^3 :2x^2xz + 2y+ 2z^3 :z^3)$$
where we have taken the projective model of $X_0(38,1)$ by introducing the variable $z$.
The rational points are 
$$\{Q_1, Q_2, Q_3\} = \{ (0:-10:1), (0:1:0), (0:9:1) \}.$$
One can easily compute the pullbacks of these points:
$$\{\pi_{19}^{-1}(Q_1), \pi_{19}^{-1}(Q_2), \pi_{19}^{-1}(Q_3)\} = \left \{ \left( \pm \frac{ \sqrt{-5}}{4}: - \frac{19}{4} : 1\right),  ( 1 : \pm 4\sqrt{-1} : 0), \left ( \pm \frac{ \sqrt{-5}}{4}:  \frac{19}{4} : 1\right ) \right \}.$$

We conclude that $(0:1:0)$ is the only rational CM point on $X_0(38,1)/\la \w_{19} \ra$ and that the other two points are non-CM.
\end{example}
 
 \begin{example}\label{CM?}
 In this example we see a quotient for which it is not possible to say exactly what the CM field $K$ is for each point, even though we know all of them are CM. 
 
 Consider the Atkin-Lehner quotient
 $$X_0(35,1)/\la \w_5 \ra :   y^2 + (x^2 + 1)y = -7x^5 + x^4 + x^3 + x^2 + 5x - 4$$
 with rational points 
 $$\{ \ (-1,-1), (3/4,-25/32), \infty \ \}.$$
 The pullback along $\pi_5$ of each of these points are contained in the field $\Q(\sqrt{-7}).$ Now computing the number of rational CM points as detailed in the previous example, we find that there is exactly one rational point on the quotient for each of the orders $\Z\left[\frac{1+\sqrt{-7}}{2}\right], \Z[ \sqrt{-7}]$ and $\Z\left [\frac{1+\sqrt{-35}}{2}\right].$ However, the ring class field $\Q(\sqrt{-7})$ of the first two orders is contained in the ring class field of the third order, meaning we are unable to identify the correct CM field for each point.
 \end{example}
 
 \bibliography{Shim_references}
 
 \section{Appendix}\label{Appendix}
 
Here we detail some of the data that has been computed as discussed in the previous sections. Included are two tables containing different information about quotients of geometrically hyperelliptic Shimura curves.  Curves of smaller genus and the rational points on their quotients already appear in the literature (cf.  \S\ref{DefiningEquations}).

The first table contains all of the rational points  on all of the quotient curves $X_0(D,1)/\la w_m \ra$ of level 1 such that the set of points is finite and non-empty.  For each rational point it is indicated whether the point is CM and if so then the discriminant of the CM field is given where possible (cf. Example \ref{CM?}).  Defining equations for each curve are also included.  As in previous sections we use $D$ for the discriminant of the quaternion algebra,  $\w_m$ for the Atkin-Lehner involution,  $X_0(D,N)/\la \w_m \ra$ for the Atkin-Lehner quotient curve and $\Delta_K$ for the fundamental discriminant of the CM field $K$.  We use `yes' or `no' to indicate whether a rational point is CM or not. 

In the second table\footnote{Note that there was a typo in the defining equations of $X_0(82,1)$ in \cite{GY17} which has been corrected.} information about all Atkin-Lehner quotients of the aforementioned Shimura curves is provided.  Specifically,  for every discriminant $D$,  level $N$ and subgroup of Atkin-Lehner involutions $W$,  the genus $g$ and the number of rational points $n$ is given.  For further information about the quotient curves - including defining equations,  the set of rational points (if finite),  what method was used to compute the rational points and equations for the projection $X_0(D,N) \longrightarrow X_0(D,N)/W$ - the interested reader can access the database online\footnote{\url{https://github.com/ciaran-schembri}}.

\begin{remark}
The methods used to determine whether a rational point on $X_0(D,1)/\la \w_m \ra$ is CM can also be used when the level is not one.  
\end{remark}


 \begin{center}
 \begin{longtable}{p{1.8cm}<{\centering} p{1.8cm}<{\centering} p{2.5cm}<{\centering} p{2cm}<{\centering} p{2cm}<{\centering}  p{0.1cm}<{\centering}}
 \caption{Rational points on the curves $X_0(D,N)/\la \w_m \ra$}\\
 \cline{1-5}
 \\[-1em]
 D & $\w_m$ & $X_0(D,1)/\la \w_m \ra( \Q)$ & CM & $\Delta_K$ &  \\
 \\[-1em]
 \cline{1-5}
 \\[-1em]
 \cline{1-5}
26 & $\w_{2}$ & & & & \\ \cline{1-2}
& & $\infty$ & yes & $-52$ & \\ 
\multicolumn{6}{c}{} \\
\multicolumn{6}{l}{$ y^2 + xy + y = x^3 - x^2 - 213x - 1257 $} \\\multicolumn{6}{c}{} \\
 \cline{1-5}
26 & $\w_{13}$ & & & & \\ \cline{1-2}
& & $(4,-9)$ & yes & $-8$ & \\ 
& & $\infty$ & no &  & \\ 
& & $(4,4)$ & no &  & \\ 
\multicolumn{6}{c}{} \\
\multicolumn{6}{l}{$ y^2 + xy + y = x^3 - 5x - 8 $} \\\multicolumn{6}{c}{} \\
 \cline{1-5}
35 & $\w_{5}$ & & & & \\ \cline{1-2}
& & $(-1,-1)$ & yes & $-7$ or $-35$ & \\ 
& & $(3/4,-25/32)$ & yes & $-7$ or $-35$ & \\ 
& & $\infty$ & yes & $-7$ or $-35$ & \\ 
\multicolumn{6}{c}{} \\
\multicolumn{6}{l}{$ y^2 + (x^2 + 1)y = -7x^5 + x^4 + x^3 + x^2 + 5x - 4 $} \\\multicolumn{6}{c}{} \\
 \cline{1-5}
35 & $\w_{7}$ & & & & \\ \cline{1-2}
& & $(1,3)$ & yes & $-35$ & \\ 
& & $\infty$ & no &  & \\ 
& & $(1,-4)$ & no &  & \\ 
\multicolumn{6}{c}{} \\
\multicolumn{6}{l}{$ y^2 + y = x^3 + x^2 + 9x + 1 $} \\\multicolumn{6}{c}{} \\
 \cline{1-5}
38 & $\w_{2}$ & & & & \\ \cline{1-2}
& & $\infty$ & yes & $-19$ & \\ 
\multicolumn{6}{c}{} \\
\multicolumn{6}{l}{$ y^2 + xy + y = x^3 + x^2 - 70x - 279 $} \\\multicolumn{6}{c}{} \\
 \cline{1-5}
38 & $\w_{19}$ & & & & \\ \cline{1-2}
& & $(0,9)$ & yes & $-4$ & \\ 
& & $\infty$ & no &  & \\ 
& & $(0,-10)$ & no &  & \\ 
\multicolumn{6}{c}{} \\
\multicolumn{6}{l}{$ y^2 + xy + y = x^3 + 9x + 90 $} \\\multicolumn{6}{c}{} \\
 \cline{1-5}
51 & $\w_{3}$ & & & & \\ \cline{1-2}
& & $\infty$ & yes & $-51$ & \\ 
\multicolumn{6}{c}{} \\
\multicolumn{6}{l}{$ y^2 + y = x^3 + x^2 - 59x - 196 $} \\\multicolumn{6}{c}{} \\
 \cline{1-5}
51 & $\w_{17}$ & & & & \\ \cline{1-2}
& & $(0,0)$ & yes & $-3$ or $-51$ & \\ 
& & $(-4,34)$ & yes & $-3$ or $-51$ & \\ 
& & $(-3,15)$ & yes & $-3$ or $-51$ & \\ 
\multicolumn{6}{c}{} \\
\multicolumn{6}{l}{$ y^2 + (x^3 + x)y = -2x^6 - 15x^5 - 39x^4 - 21x^3 + 19x^2 - 3x $} \\\multicolumn{6}{c}{} \\
 \cline{1-5}
57 & $\w_{3}$ & & & & \\ \cline{1-2}
& & $(-1,1)$ & yes & $-19$ & \\ 
\multicolumn{6}{c}{} \\
\multicolumn{6}{l}{$ y^2 + (x^3 + x)y = -2x^6 + 9x^5 - 13x^4 - 6x^3 + 12x^2 - 6x - 1 $} \\\multicolumn{6}{c}{} \\
 \cline{1-5}
58 & $\w_{2}$ & & & & \\ \cline{1-2}
& & $\infty$ & yes & $-232$ & \\ 
\multicolumn{6}{c}{} \\
\multicolumn{6}{l}{$ y^2 + xy + y = x^3 + x^2 - 455x - 3951 $} \\\multicolumn{6}{c}{} \\
 \cline{1-5}
62 & $\w_{31}$ & & & & \\ \cline{1-2}
& & $(-1,0)$ & yes & $-4$ & \\ 
& & $\infty$ & yes & $-8$ & \\ 
\multicolumn{6}{c}{} \\
\multicolumn{6}{l}{$ y^2 + (x^2 + x)y = 2x^5 - x^4 - x^3 - 2x^2 + x + 5 $} \\\multicolumn{6}{c}{} \\
 \cline{1-5}
69 & $\w_{23}$ & & & & \\ \cline{1-2}
& & $(1/4,-5/32)$ & yes & $-3$ & \\ 
& & $\infty$ & yes & $-3$ & \\ 
\multicolumn{6}{c}{} \\
\multicolumn{6}{l}{$ y^2 + (x^2 + x)y = 3x^5 - 11x^4 + 13x^3 + 13x^2 - 1 $} \\\multicolumn{6}{c}{} \\
 \cline{1-5}
74 & $\w_{2}$ & & & & \\ \cline{1-2}
& & $(-5/3,-5/9)$ & yes & $-148$ & \\ 
\multicolumn{6}{c}{} \\
\multicolumn{6}{l}{$ y^2 + (x^2 + x)y = -6x^6 - 16x^5 - 8x^4 - 8x^3 - 7x^2 + 17x - 5 $} \\\multicolumn{6}{c}{} \\
 \cline{1-5}
74 & $\w_{37}$ & & & & \\ \cline{1-2}
& & $(0,0)$ & yes & $-8$ & \\ 
\multicolumn{6}{c}{} \\
\multicolumn{6}{l}{$ y^2 + (x^2 + x)y = -x^6 + x^5 + 2x^4 - 4x^3 - x^2 + 4x $} \\\multicolumn{6}{c}{} \\
 \cline{1-5}
86 & $\w_{2}$ & & & & \\ \cline{1-2}
& & $(-3,-3)$ & yes & $-43$ & \\ 
\multicolumn{6}{c}{} \\
\multicolumn{6}{l}{$ y^2 + (x^2 + x)y = -x^6 - 5x^5 - x^4 + 10x^3 - 24x^2 - 26x - 6 $} \\\multicolumn{6}{c}{} \\
 \cline{1-5}
86 & $\w_{43}$ & & & & \\ \cline{1-2}
& & $(2,-3)$ & yes & $-4$ & \\ 
\multicolumn{6}{c}{} \\
\multicolumn{6}{l}{$ y^2 + (x^2 + x)y = 2x^6 - 2x^5 - 11x^4 + 6x^3 + 20x^2 - 7x - 11 $} \\\multicolumn{6}{c}{} \\
 \cline{1-5}
87 & $\w_{29}$ & & & & \\ \cline{1-2}
& & $(-1/2,-9/16)$ & yes & $-3$ & \\ 
& & $(1/2,-11/16)$ & yes & $-3$ & \\ 
\multicolumn{6}{c}{} \\
\multicolumn{6}{l}{$ y^2 + (x^3 + x^2 + 1)y = 3x^8 + 18x^7 + 70x^6 + 93x^5 - 36x^4 - 81x^3  $} \\
\multicolumn{6}{l}{$ \ \ \ \ \ + 23x^2 + 14x - 5 $} \\
\multicolumn{6}{c}{} \\
 \cline{1-5}
93 & $\w_{93}$ & & & & \\ \cline{1-2}
& & $(4/3,-5)$ & yes & $-163$ & \\ 
& & $(4/3,8/27)$ & yes & $-163$ & \\ 
& & $(1,-3)$ & yes & $-4$ & \\ 
& & $(1,0)$ & yes & $-4$ & \\ 
& & $(-1,2)$ & yes & $-4$ & \\ 
& & $(-1,-1)$ & yes & $-4$ & \\ 
& & $(0,0)$ & yes & $-19$ & \\ 
& & $(0,-1)$ & yes & $-19$ & \\ 
& & $(-2,9)$ & yes & $-67$ & \\ 
& & $(-2,0)$ & yes & $-67$ & \\ 
& & $\infty$ & yes & $-7$ & \\ 
& & $(1/2,-1)$ & yes & $-7$ & \\ 
& & $\infty'$ & yes & $-7$ & \\ 
& & $(1/2,-5/8)$ & yes & $-7$ & \\ 
\multicolumn{6}{c}{} \\
\multicolumn{6}{l}{$ y^2 + (x^3 + x + 1)y = x^3 + x^2 - 2x $} \\\multicolumn{6}{c}{} \\
 \cline{1-5}
94 & $\w_{47}$ & & & & \\ \cline{1-2}
& & $(-2,-1)$ & yes & $-4$ & \\ 
& & $\infty$ & yes & $-8$ & \\ 
\multicolumn{6}{c}{} \\
\multicolumn{6}{l}{$ y^2 + (x^2 + x)y = -2x^5 - 3x^4 - x^3 - 2x^2 + 7x - 3 $} \\\multicolumn{6}{c}{} \\
 \cline{1-5}
119 & $\w_{17}$ & & & & \\ \cline{1-2}
& & $(-1,-1)$ & yes & $-7$ & \\ 
& & $(3,-419)$ & yes & $-7$ & \\ 
\multicolumn{6}{c}{} \\
\multicolumn{6}{l}{$ y^2 + (x^6 + x^4 + x^3 + 1)y = -3x^{12} + 2x^{11} + 20x^{10} - 33x^8 + 31x^7  $} \\
\multicolumn{6}{l}{$  \ \ \ \ \ + 64x^6 - 47x^5 - 55x^4 + 50x^3 + 21x^2 - 34x - 13 $} \\
\multicolumn{6}{c}{} \\
 \cline{1-5}
134 & $\w_{2}$ & & & & \\ \cline{1-2}
& & $(-1/2,-1/16)$ & yes & $-67$ & \\ 
\multicolumn{6}{c}{} \\
\multicolumn{6}{l}{$ y^2 + (x^3 + x^2)y = -19x^8 + 53x^7 + 21x^6 - 143x^5 + 5x^4 + 143x^3  $} \\
\multicolumn{6}{l}{$\ \ \ \ \  + 5x^2 - 56x - 16 $} \\
\multicolumn{6}{c}{} \\
 \cline{1-5}
134 & $\w_{67}$ & & & & \\ \cline{1-2}
& & $(1,-1)$ & yes & $-4$ & \\ 
\multicolumn{6}{c}{} \\
\multicolumn{6}{l}{$ y^2 + (x^3 + x^2)y = 2x^8 - 2x^7 - 13x^6 + 14x^5 + 20x^4 - 35x^3 $} \\
\multicolumn{6}{l}{$ \ \ \ \ \  + 13x^2 + 32x - 32 $} \\
\multicolumn{6}{c}{} \\
 \cline{1-5}
159 & $\w_{53}$ & & & & \\ \cline{1-2}
& & $\infty$ & yes & $-3$ & \\ 
& & $(1/4,-1109/2048)$ & yes & $-3$ & \\ 
\multicolumn{6}{c}{} \\
\multicolumn{6}{l}{$ y^2 + (x^5 + x^4 + x^3 + x^2 + 1)y = 3x^{11} + 25x^{10} - 46x^9 - 483x^8 $} \\
\multicolumn{6}{l}{$\ \ \ \ \ + 571x^7 + 1063x^6 - 286x^5 - 1361x^4 + 1221x^3 - 437x^2 + 73x - 5   $}\\
\multicolumn{6}{c}{} \\
 \cline{1-5}
206 & $\w_{103}$ & & & & \\ \cline{1-2}
& & $(0,0)$ & yes & $-4$ & \\ 
& & $\infty$ & yes & $-8$ & \\ 
\multicolumn{6}{c}{} \\
\multicolumn{6}{l}{$ y^2 + (x^5 + x^4 + x^3 + x^2)y = -2x^{11} + 3x^{10} + 10x^9 + 82x^8 + 54x^7  $} \\
\multicolumn{6}{l}{$\ \ \ \ \ - 184x^6 - 1662x^5 - 4971x^4 - 7210x^3 - 4556x^2 - 1024x $}\\
\multicolumn{6}{c}{} \\
 \cline{1-5}
\end{longtable}
\end{center}

\clearpage

\begin{multicols*}{3}
\TrickSupertabularIntoMulticols
\topcaption{$X_0(D,N)/W$}
\begin{supertabular}{p{0.3cm}<{\centering}  p{0.3cm}<{\centering}  p{0.3cm}<{\centering} p{2.1cm}<{\centering}  p{0.3cm}<{\centering}}
\hline
 \\[-1em]
$D$ & $N$ & $g$ & $W$ gens. & $n$ \\
 \\[-1em]
 \hline
 \\[-1em]
\hline 
 26 & 1 & 2 &  & 0 \\ 
  &  & 1 & $\w_{2}$ & 1 \\ 
  &  & 1 & $\w_{13}$ & 3 \\ 
  &  & 0 & $\w_{26}$ & $\infty$ \\ 
  &  & 0 & $\w_{2},\w_{13}$ & $\infty$ \\ 
\hline \hline
 35 & 1 & 3 &  & 0 \\ 
  &  & 2 & $\w_{5}$ & 3 \\ 
  &  & 1 & $\w_{7}$ & 3 \\ 
  &  & 0 & $\w_{35}$ & $\infty$ \\ 
  &  & 0 & $\w_{5},\w_{7}$ & $\infty$ \\ 
\hline \hline
 38 & 1 & 2 &  & 0 \\ 
  &  & 1 & $\w_{2}$ & 1 \\ 
  &  & 1 & $\w_{19}$ & 3 \\ 
  &  & 0 & $\w_{38}$ & $\infty$ \\ 
  &  & 0 & $\w_{2},\w_{19}$ & $\infty$ \\ 
\hline \hline
 39 & 1 & 3 &  & 0 \\ 
  &  & 2 & $\w_{3}$ & 0 \\ 
  &  & 1 & $\w_{13}$ & 0 \\ 
  &  & 0 & $\w_{39}$ & $\infty$ \\ 
  &  & 0 & $\w_{3},\w_{13}$ & $\infty$ \\ 
\hline \hline
 51 & 1 & 3 &  & 0 \\ 
  &  & 1 & $\w_{3}$ & 1 \\ 
  &  & 2 & $\w_{17}$ & 3 \\ 
  &  & 0 & $\w_{51}$ & $\infty$ \\ 
  &  & 0 & $\w_{3},\w_{17}$ & $\infty$ \\ 
\hline \hline
 55 & 1 & 3 &  & 0 \\ 
  &  & 1 & $\w_{5}$ & 0 \\ 
  &  & 2 & $\w_{11}$ & 0 \\ 
  &  & 0 & $\w_{55}$ & $\infty$ \\ 
  &  & 0 & $\w_{5},\w_{11}$ & $\infty$ \\ 
\hline \hline
 57 & 1 & 3 &  & 0 \\ 
  &  & 2 & $\w_{3}$ & 1 \\ 
  &  & 0 & $\w_{19}$ & 0 \\ 
  &  & 1 & $\w_{57}$ & $\infty$ \\ 
  &  & 0 & $\w_{3},\w_{19}$ & $\infty$ \\ 
\hline \hline
 58 & 1 & 2 &  & 0 \\ 
  &  & 1 & $\w_{2}$ & 1 \\ 
  &  & 0 & $\w_{29}$ & $\infty$ \\ 
  &  & 1 & $\w_{58}$ & $\infty$ \\ 
  &  & 0 & $\w_{2},\w_{29}$ & $\infty$ \\ 
\hline \hline
 62 & 1 & 3 &  & 0 \\ 
  &  & 1 & $\w_{2}$ & 0 \\ 
  &  & 2 & $\w_{31}$ & 2 \\ 
  &  & 0 & $\w_{62}$ & $\infty$ \\ 
  &  & 0 & $\w_{2},\w_{31}$ & $\infty$ \\ 
\hline \hline
 69 & 1 & 3 &  & 0 \\ 
  &  & 1 & $\w_{3}$ & 0 \\ 
  &  & 2 & $\w_{23}$ & 2 \\ 
  &  & 0 & $\w_{69}$ & $\infty$ \\ 
  &  & 0 & $\w_{3},\w_{23}$ & $\infty$ \\ 
\hline \hline
 74 & 1 & 4 &  & 0 \\ 
  &  & 2 & $\w_{2}$ & 1 \\ 
  &  & 2 & $\w_{37}$ & 1 \\ 
  &  & 0 & $\w_{74}$ & $\infty$ \\ 
  &  & 0 & $\w_{2},\w_{37}$ & $\infty$ \\ 
\hline \hline
 82 & 1 & 3 &  & 0 \\ 
  &  & 2 & $\w_{2}$ & 0 \\ 
  &  & 0 & $\w_{41}$ & 0 \\ 
  &  & 1 & $\w_{82}$ & $\infty$ \\ 
  &  & 0 & $\w_{2},\w_{41}$ & $\infty$ \\ 
\hline \hline
 86 & 1 & 4 &  & 0 \\ 
  &  & 2 & $\w_{2}$ & 1 \\ 
  &  & 2 & $\w_{43}$ & 1 \\ 
  &  & 0 & $\w_{86}$ & $\infty$ \\ 
  &  & 0 & $\w_{2},\w_{43}$ & $\infty$ \\ 
\hline \hline
 87 & 1 & 5 &  & 0 \\ 
  &  & 2 & $\w_{3}$ & 0 \\ 
  &  & 3 & $\w_{29}$ & 2 \\ 
  &  & 0 & $\w_{87}$ & $\infty$ \\ 
  &  & 0 & $\w_{3},\w_{29}$ & $\infty$ \\ 
\hline \hline
 93 & 1 & 5 &  & 0 \\ 
  &  & 3 & $\w_{3}$ & 0 \\ 
  &  & 0 & $\w_{31}$ & 0 \\ 
  &  & 2 & $\w_{93}$ & 14 \\ 
  &  & 0 & $\w_{3},\w_{31}$ & $\infty$ \\ 
\hline \hline
 94 & 1 & 3 &  & 0 \\ 
  &  & 1 & $\w_{2}$ & 0 \\ 
  &  & 2 & $\w_{47}$ & 2 \\ 
  &  & 0 & $\w_{94}$ & $\infty$ \\ 
  &  & 0 & $\w_{2},\w_{47}$ & $\infty$ \\ 
\hline \hline
 95 & 1 & 7 &  & 0 \\ 
  &  & 3 & $\w_{5}$ & 0 \\ 
  &  & 4 & $\w_{19}$ & 0 \\ 
  &  & 0 & $\w_{95}$ & $\infty$ \\ 
  &  & 0 & $\w_{5},\w_{19}$ & $\infty$ \\ 
\hline \hline
 111 & 1 & 7 &  & 0 \\ 
  &  & 4 & $\w_{3}$ & 0 \\ 
  &  & 3 & $\w_{37}$ & 0 \\ 
  &  & 0 & $\w_{111}$ & $\infty$ \\ 
  &  & 0 & $\w_{3},\w_{37}$ & $\infty$ \\ 
\hline \hline
 119 & 1 & 9 &  & 0 \\ 
  &  & 4 & $\w_{7}$ & 0 \\ 
  &  & 5 & $\w_{17}$ & 2 \\ 
  &  & 0 & $\w_{119}$ & $\infty$ \\ 
  &  & 0 & $\w_{7},\w_{17}$ & $\infty$ \\ 
\hline \hline
 134 & 1 & 6 &  & 0 \\ 
  &  & 3 & $\w_{2}$ & 1 \\ 
  &  & 3 & $\w_{67}$ & 1 \\ 
  &  & 0 & $\w_{134}$ & $\infty$ \\ 
  &  & 0 & $\w_{2},\w_{67}$ & $\infty$ \\ 
\hline \hline
 146 & 1 & 7 &  & 0 \\ 
  &  & 4 & $\w_{2}$ & 0 \\ 
  &  & 3 & $\w_{73}$ & 0 \\ 
  &  & 0 & $\w_{146}$ & $\infty$ \\ 
  &  & 0 & $\w_{2},\w_{73}$ & $\infty$ \\ 
\hline \hline
 159 & 1 & 9 &  & 0 \\ 
  &  & 4 & $\w_{3}$ & 0 \\ 
  &  & 5 & $\w_{53}$ & 2 \\ 
  &  & 0 & $\w_{159}$ & $\infty$ \\ 
  &  & 0 & $\w_{3},\w_{53}$ & $\infty$ \\ 
\hline \hline
 194 & 1 & 9 &  & 0 \\ 
  &  & 5 & $\w_{2}$ & 0 \\ 
  &  & 4 & $\w_{97}$ & 0 \\ 
  &  & 0 & $\w_{194}$ & $\infty$ \\ 
  &  & 0 & $\w_{2},\w_{97}$ & $\infty$ \\ 
\hline \hline
 206 & 1 & 9 &  & 0 \\ 
  &  & 4 & $\w_{2}$ & 0 \\ 
  &  & 5 & $\w_{103}$ & 2 \\ 
  &  & 0 & $\w_{206}$ & $\infty$ \\ 
  &  & 0 & $\w_{2},\w_{103}$ & $\infty$ \\ 
\hline \hline
 6 & 11 & 3 &  & 0 \\ 
  &  & 2 & $\w_{2}$ & 0 \\ 
  &  & 2 & $\w_{3}$ & 0 \\ 
  &  & 1 & $\w_{6}$ & 0 \\ 
  &  & 2 & $\w_{11}$ & 0 \\ 
  &  & 1 & $\w_{22}$ & 0 \\ 
  &  & 1 & $\w_{33}$ & 0 \\ 
  &  & 0 & $\w_{66}$ & $\infty$ \\ 
  &  & 1 & $\w_{2},\w_{3}$ & 2 \\ 
  &  & 1 & $\w_{2},\w_{11}$ & 2 \\ 
  &  & 0 & $\w_{2},\w_{33}$ & $\infty$ \\ 
  &  & 1 & $\w_{3},\w_{11}$ & 2 \\ 
  &  & 0 & $\w_{3},\w_{22}$ & $\infty$ \\ 
  &  & 0 & $\w_{6},\w_{11}$ & $\infty$ \\ 
  &  & 0 & $\w_{6},\w_{22}$ & 0 \\ 
  &  & 0 & $\w_{2},\w_{3},\w_{11}$ & $\infty$ \\ 
\hline \hline
 6 & 17 & 3 &  & 0 \\ 
  &  & 1 & $\w_{2}$ & 0 \\ 
  &  & 2 & $\w_{3}$ & 0 \\ 
  &  & 2 & $\w_{6}$ & 0 \\ 
  &  & 2 & $\w_{17}$ & 0 \\ 
  &  & 0 & $\w_{34}$ & 0 \\ 
  &  & 1 & $\w_{51}$ & 2 \\ 
  &  & 1 & $\w_{102}$ & $\infty$ \\ 
  &  & 1 & $\w_{2},\w_{3}$ & 2 \\ 
  &  & 0 & $\w_{2},\w_{17}$ & $\infty$ \\ 
  &  & 0 & $\w_{2},\w_{51}$ & $\infty$ \\ 
  &  & 1 & $\w_{3},\w_{17}$ & 2 \\ 
  &  & 0 & $\w_{3},\w_{34}$ & $\infty$ \\ 
  &  & 1 & $\w_{6},\w_{17}$ & $\infty$ \\ 
  &  & 0 & $\w_{6},\w_{34}$ & $\infty$ \\ 
  &  & 0 & $\w_{2},\w_{3},\w_{17}$ & $\infty$ \\ 
\hline \hline
 6 & 19 & 3 &  & 0 \\ 
  &  & 2 & $\w_{2}$ & 0 \\ 
  &  & 1 & $\w_{3}$ & 0 \\ 
  &  & 2 & $\w_{6}$ & 2 \\ 
  &  & 1 & $\w_{19}$ & 0 \\ 
  &  & 2 & $\w_{38}$ & 2 \\ 
  &  & 1 & $\w_{57}$ & 0 \\ 
  &  & 0 & $\w_{114}$ & $\infty$ \\ 
  &  & 1 & $\w_{2},\w_{3}$ & 2 \\ 
  &  & 1 & $\w_{2},\w_{19}$ & 2 \\ 
  &  & 0 & $\w_{2},\w_{57}$ & $\infty$ \\ 
  &  & 0 & $\w_{3},\w_{19}$ & 0 \\ 
  &  & 0 & $\w_{3},\w_{38}$ & $\infty$ \\ 
  &  & 0 & $\w_{6},\w_{19}$ & $\infty$ \\ 
  &  & 1 & $\w_{6},\w_{38}$ & 2 \\ 
  &  & 0 & $\w_{2},\w_{3},\w_{19}$ & $\infty$ \\ 
\hline \hline
 6 & 29 & 5 &  & 0 \\ 
  &  & 2 & $\w_{2}$ & 0 \\ 
  &  & 3 & $\w_{3}$ & 0 \\ 
  &  & 2 & $\w_{6}$ & 0 \\ 
  &  & 3 & $\w_{29}$ & 0 \\ 
  &  & 2 & $\w_{58}$ & 0 \\ 
  &  & 3 & $\w_{87}$ & 2 \\ 
  &  & 0 & $\w_{174}$ & $\infty$ \\ 
  &  & 1 & $\w_{2},\w_{3}$ & 1 \\ 
  &  & 1 & $\w_{2},\w_{29}$ & 1 \\ 
  &  & 0 & $\w_{2},\w_{87}$ & $\infty$ \\ 
  &  & 2 & $\w_{3},\w_{29}$ & 3 \\ 
  &  & 0 & $\w_{3},\w_{58}$ & $\infty$ \\ 
  &  & 0 & $\w_{6},\w_{29}$ & $\infty$ \\ 
  &  & 1 & $\w_{6},\w_{58}$ & 1 \\ 
  &  & 0 & $\w_{2},\w_{3},\w_{29}$ & $\infty$ \\ 
\hline \hline
 6 & 31 & 5 &  & 0 \\ 
  &  & 3 & $\w_{2}$ & 0 \\ 
  &  & 2 & $\w_{3}$ & 0 \\ 
  &  & 2 & $\w_{6}$ & 0 \\ 
  &  & 3 & $\w_{31}$ & 0 \\ 
  &  & 3 & $\w_{62}$ & 2 \\ 
  &  & 2 & $\w_{93}$ & 0 \\ 
  &  & 0 & $\w_{186}$ & $\infty$ \\ 
  &  & 1 & $\w_{2},\w_{3}$ & 1 \\ 
  &  & 2 & $\w_{2},\w_{31}$ & 3 \\ 
  &  & 0 & $\w_{2},\w_{93}$ & $\infty$ \\ 
  &  & 1 & $\w_{3},\w_{31}$ & 1 \\ 
  &  & 0 & $\w_{3},\w_{62}$ & $\infty$ \\ 
  &  & 0 & $\w_{6},\w_{31}$ & $\infty$ \\ 
  &  & 1 & $\w_{6},\w_{62}$ & 1 \\ 
  &  & 0 & $\w_{2},\w_{3},\w_{31}$ & $\infty$ \\ 
\hline \hline
 6 & 37 & 5 &  & 0 \\ 
  &  & 2 & $\w_{2}$ & 0 \\ 
  &  & 2 & $\w_{3}$ & 0 \\ 
  &  & 3 & $\w_{6}$ & 0 \\ 
  &  & 2 & $\w_{37}$ & 0 \\ 
  &  & 3 & $\w_{74}$ & 2 \\ 
  &  & 3 & $\w_{111}$ & 2 \\ 
  &  & 0 & $\w_{222}$ & $\infty$ \\ 
  &  & 1 & $\w_{2},\w_{3}$ & 1 \\ 
  &  & 1 & $\w_{2},\w_{37}$ & 1 \\ 
  &  & 0 & $\w_{2},\w_{111}$ & $\infty$ \\ 
  &  & 1 & $\w_{3},\w_{37}$ & 1 \\ 
  &  & 0 & $\w_{3},\w_{74}$ & $\infty$ \\ 
  &  & 0 & $\w_{6},\w_{37}$ & $\infty$ \\ 
  &  & 2 & $\w_{6},\w_{74}$ & 3 \\ 
  &  & 0 & $\w_{2},\w_{3},\w_{37}$ & $\infty$ \\ 
\hline \hline
 10 & 11 & 5 &  & 0 \\ 
  &  & 2 & $\w_{2}$ & 0 \\ 
  &  & 3 & $\w_{5}$ & 0 \\ 
  &  & 2 & $\w_{10}$ & 0 \\ 
  &  & 3 & $\w_{11}$ & 0 \\ 
  &  & 2 & $\w_{22}$ & 0 \\ 
  &  & 3 & $\w_{55}$ & 2 \\ 
  &  & 0 & $\w_{110}$ & $\infty$ \\ 
  &  & 1 & $\w_{2},\w_{5}$ & 1 \\ 
  &  & 1 & $\w_{2},\w_{11}$ & 1 \\ 
  &  & 0 & $\w_{2},\w_{55}$ & $\infty$ \\ 
  &  & 2 & $\w_{5},\w_{11}$ & 3 \\ 
  &  & 0 & $\w_{5},\w_{22}$ & $\infty$ \\ 
  &  & 0 & $\w_{10},\w_{11}$ & $\infty$ \\ 
  &  & 1 & $\w_{10},\w_{22}$ & 3 \\ 
  &  & 0 & $\w_{2},\w_{5},\w_{11}$ & $\infty$ \\ 
\hline \hline
 10 & 13 & 3 &  & 0 \\ 
  &  & 2 & $\w_{2}$ & 0 \\ 
  &  & 2 & $\w_{5}$ & 0 \\ 
  &  & 1 & $\w_{10}$ & 0 \\ 
  &  & 1 & $\w_{13}$ & 0 \\ 
  &  & 2 & $\w_{26}$ & 0 \\ 
  &  & 0 & $\w_{65}$ & 0 \\ 
  &  & 1 & $\w_{130}$ & $\infty$ \\ 
  &  & 1 & $\w_{2},\w_{5}$ & 2 \\ 
  &  & 1 & $\w_{2},\w_{13}$ & 4 \\ 
  &  & 0 & $\w_{2},\w_{65}$ & $\infty$ \\ 
  &  & 0 & $\w_{5},\w_{13}$ & $\infty$ \\ 
  &  & 1 & $\w_{5},\w_{26}$ & $\infty$ \\ 
  &  & 0 & $\w_{10},\w_{13}$ & $\infty$ \\ 
  &  & 0 & $\w_{10},\w_{26}$ & $\infty$ \\ 
  &  & 0 & $\w_{2},\w_{5},\w_{13}$ & $\infty$ \\ 
\hline \hline
 10 & 19 & 5 &  & 0 \\ 
  &  & 2 & $\w_{2}$ & 0 \\ 
  &  & 3 & $\w_{5}$ & 0 \\ 
  &  & 2 & $\w_{10}$ & 0 \\ 
  &  & 3 & $\w_{19}$ & 0 \\ 
  &  & 0 & $\w_{38}$ & 0 \\ 
  &  & 3 & $\w_{95}$ & 2 \\ 
  &  & 2 & $\w_{190}$ & 14 \\ 
  &  & 1 & $\w_{2},\w_{5}$ & 1 \\ 
  &  & 0 & $\w_{2},\w_{19}$ & $\infty$ \\ 
  &  & 1 & $\w_{2},\w_{95}$ & $\infty$ \\ 
  &  & 2 & $\w_{5},\w_{19}$ & 3 \\ 
  &  & 0 & $\w_{5},\w_{38}$ & $\infty$ \\ 
  &  & 1 & $\w_{10},\w_{19}$ & $\infty$ \\ 
  &  & 0 & $\w_{10},\w_{38}$ & $\infty$ \\ 
  &  & 0 & $\w_{2},\w_{5},\w_{19}$ & $\infty$ \\ 
\hline \hline
 10 & 23 & 9 &  & 0 \\ 
  &  & 5 & $\w_{2}$ & 0 \\ 
  &  & 4 & $\w_{5}$ & 0 \\ 
  &  & 4 & $\w_{10}$ & 0 \\ 
  &  & 5 & $\w_{23}$ & 0 \\ 
  &  & 5 & $\w_{46}$ & 0 \\ 
  &  & 4 & $\w_{115}$ & 0 \\ 
  &  & 0 & $\w_{230}$ & $\infty$ \\ 
  &  & 2 & $\w_{2},\w_{5}$ & 1 \\ 
  &  & 3 & $\w_{2},\w_{23}$ & 3 \\ 
  &  & 0 & $\w_{2},\w_{115}$ & $\infty$ \\ 
  &  & 2 & $\w_{5},\w_{23}$ & 1 \\ 
  &  & 0 & $\w_{5},\w_{46}$ & $\infty$ \\ 
  &  & 0 & $\w_{10},\w_{23}$ & $\infty$ \\ 
  &  & 2 & $\w_{10},\w_{46}$ & 1 \\ 
  &  & 0 & $\w_{2},\w_{5},\w_{23}$ & $\infty$ \\ 
\hline \hline
 14 & 3 & 3 &  & 0 \\ 
  &  & 1 & $\w_{2}$ & 0 \\ 
  &  & 2 & $\w_{3}$ & 0 \\ 
  &  & 2 & $\w_{6}$ & 0 \\ 
  &  & 2 & $\w_{7}$ & 0 \\ 
  &  & 0 & $\w_{14}$ & 0 \\ 
  &  & 1 & $\w_{21}$ & 6 \\ 
  &  & 1 & $\w_{42}$ & 6 \\ 
  &  & 1 & $\w_{2},\w_{3}$ & 2 \\ 
  &  & 0 & $\w_{2},\w_{7}$ & $\infty$ \\ 
  &  & 0 & $\w_{2},\w_{21}$ & $\infty$ \\ 
  &  & 1 & $\w_{3},\w_{7}$ & 6 \\ 
  &  & 0 & $\w_{3},\w_{14}$ & $\infty$ \\ 
  &  & 1 & $\w_{6},\w_{7}$ & 6 \\ 
  &  & 0 & $\w_{6},\w_{14}$ & $\infty$ \\ 
  &  & 0 & $\w_{2},\w_{3},\w_{7}$ & $\infty$ \\ 
\hline \hline
 14 & 5 & 3 &  & 0 \\ 
  &  & 1 & $\w_{2}$ & 0 \\ 
  &  & 1 & $\w_{5}$ & 6 \\ 
  &  & 2 & $\w_{7}$ & 0 \\ 
  &  & 1 & $\w_{10}$ & 2 \\ 
  &  & 0 & $\w_{14}$ & $\infty$ \\ 
  &  & 2 & $\w_{35}$ & 0 \\ 
  &  & 2 & $\w_{70}$ & 0 \\ 
  &  & 0 & $\w_{2},\w_{5}$ & $\infty$ \\ 
  &  & 0 & $\w_{2},\w_{7}$ & $\infty$ \\ 
  &  & 1 & $\w_{2},\w_{35}$ & 2 \\ 
  &  & 1 & $\w_{5},\w_{7}$ & 6 \\ 
  &  & 0 & $\w_{5},\w_{14}$ & $\infty$ \\ 
  &  & 1 & $\w_{7},\w_{10}$ & 2 \\ 
  &  & 0 & $\w_{10},\w_{14}$ & $\infty$ \\ 
  &  & 0 & $\w_{2},\w_{5},\w_{7}$ & $\infty$ \\ 
\hline \hline
 15 & 2 & 3 &  & 0 \\ 
  &  & 2 & $\w_{2}$ & 0 \\ 
  &  & 1 & $\w_{3}$ & 2 \\ 
  &  & 2 & $\w_{5}$ & 4 \\ 
  &  & 2 & $\w_{6}$ & 0 \\ 
  &  & 1 & $\w_{10}$ & 0 \\ 
  &  & 0 & $\w_{15}$ & $\infty$ \\ 
  &  & 1 & $\w_{30}$ & 4 \\ 
  &  & 1 & $\w_{2},\w_{3}$ & 4 \\ 
  &  & 1 & $\w_{2},\w_{5}$ & 8 \\ 
  &  & 0 & $\w_{2},\w_{15}$ & $\infty$ \\ 
  &  & 0 & $\w_{3},\w_{5}$ & $\infty$ \\ 
  &  & 0 & $\w_{3},\w_{10}$ & $\infty$ \\ 
  &  & 1 & $\w_{5},\w_{6}$ & 8 \\ 
  &  & 0 & $\w_{6},\w_{10}$ & $\infty$ \\ 
  &  & 0 & $\w_{2},\w_{3},\w_{5}$ & $\infty$ \\ 
\hline \hline
 15 & 4 & 5 &  & 0 \\ 
  &  & 2 & $\w_{3}$ & 0 \\ 
  &  & 2 & $\w_{4}$ & 6 \\ 
  &  & 3 & $\w_{5}$ & 0 \\ 
  &  & 3 & $\w_{12}$ & 2 \\ 
  &  & 0 & $\w_{15}$ & 0 \\ 
  &  & 2 & $\w_{20}$ & 0 \\ 
  &  & 3 & $\w_{60}$ & 0 \\ 
  &  & 1 & $\w_{3},\w_{4}$ & 6 \\ 
  &  & 0 & $\w_{3},\w_{5}$ & $\infty$ \\ 
  &  & 1 & $\w_{3},\w_{20}$ & 2 \\ 
  &  & 1 & $\w_{4},\w_{5}$ & 4 \\ 
  &  & 0 & $\w_{4},\w_{15}$ & $\infty$ \\ 
  &  & 2 & $\w_{5},\w_{12}$ & 4 \\ 
  &  & 0 & $\w_{12},\w_{15}$ & $\infty$ \\ 
  &  & 0 & $\w_{3},\w_{4},\w_{5}$ & $\infty$ \\ 
\hline \hline
 21 & 2 & 3 &  & 0 \\ 
  &  & 1 & $\w_{2}$ & 0 \\ 
  &  & 2 & $\w_{3}$ & 2 \\ 
  &  & 2 & $\w_{6}$ & 2 \\ 
  &  & 0 & $\w_{7}$ & 0 \\ 
  &  & 2 & $\w_{14}$ & 0 \\ 
  &  & 1 & $\w_{21}$ & 8 \\ 
  &  & 1 & $\w_{42}$ & 4 \\ 
  &  & 1 & $\w_{2},\w_{3}$ & 4 \\ 
  &  & 0 & $\w_{2},\w_{7}$ & $\infty$ \\ 
  &  & 0 & $\w_{2},\w_{21}$ & $\infty$ \\ 
  &  & 0 & $\w_{3},\w_{7}$ & $\infty$ \\ 
  &  & 1 & $\w_{3},\w_{14}$ & 8 \\ 
  &  & 0 & $\w_{6},\w_{7}$ & $\infty$ \\ 
  &  & 1 & $\w_{6},\w_{14}$ & 8 \\ 
  &  & 0 & $\w_{2},\w_{3},\w_{7}$ & $\infty$ \\ 
\hline \hline
 22 & 3 & 3 &  & 0 \\ 
  &  & 2 & $\w_{2}$ & 0 \\ 
  &  & 1 & $\w_{3}$ & 0 \\ 
  &  & 2 & $\w_{6}$ & 2 \\ 
  &  & 1 & $\w_{11}$ & 0 \\ 
  &  & 2 & $\w_{22}$ & 6 \\ 
  &  & 1 & $\w_{33}$ & 0 \\ 
  &  & 0 & $\w_{66}$ & $\infty$ \\ 
  &  & 1 & $\w_{2},\w_{3}$ & 2 \\ 
  &  & 1 & $\w_{2},\w_{11}$ & 10 \\ 
  &  & 0 & $\w_{2},\w_{33}$ & $\infty$ \\ 
  &  & 0 & $\w_{3},\w_{11}$ & 0 \\ 
  &  & 0 & $\w_{3},\w_{22}$ & $\infty$ \\ 
  &  & 0 & $\w_{6},\w_{11}$ & $\infty$ \\ 
  &  & 1 & $\w_{6},\w_{22}$ & 6 \\ 
  &  & 0 & $\w_{2},\w_{3},\w_{11}$ & $\infty$ \\ 
\hline \hline
 22 & 5 & 5 &  & 0 \\ 
  &  & 2 & $\w_{2}$ & 0 \\ 
  &  & 2 & $\w_{5}$ & 0 \\ 
  &  & 3 & $\w_{10}$ & 2 \\ 
  &  & 2 & $\w_{11}$ & 0 \\ 
  &  & 3 & $\w_{22}$ & 0 \\ 
  &  & 3 & $\w_{55}$ & 2 \\ 
  &  & 0 & $\w_{110}$ & $\infty$ \\ 
  &  & 1 & $\w_{2},\w_{5}$ & 1 \\ 
  &  & 1 & $\w_{2},\w_{11}$ & 5 \\ 
  &  & 0 & $\w_{2},\w_{55}$ & $\infty$ \\ 
  &  & 1 & $\w_{5},\w_{11}$ & 1 \\ 
  &  & 0 & $\w_{5},\w_{22}$ & $\infty$ \\ 
  &  & 0 & $\w_{10},\w_{11}$ & $\infty$ \\ 
  &  & 2 & $\w_{10},\w_{22}$ & 3 \\ 
  &  & 0 & $\w_{2},\w_{5},\w_{11}$ & $\infty$ \\ 
\hline \hline
 26 & 3 & 5 &  & 0 \\ 
  &  & 2 & $\w_{2}$ & 0 \\ 
  &  & 3 & $\w_{3}$ & 0 \\ 
  &  & 2 & $\w_{6}$ & 0 \\ 
  &  & 3 & $\w_{13}$ & 0 \\ 
  &  & 0 & $\w_{26}$ & 0 \\ 
  &  & 3 & $\w_{39}$ & 2 \\ 
  &  & 2 & $\w_{78}$ & 6 \\ 
  &  & 1 & $\w_{2},\w_{3}$ & 1 \\ 
  &  & 0 & $\w_{2},\w_{13}$ & $\infty$ \\ 
  &  & 1 & $\w_{2},\w_{39}$ & 7 \\ 
  &  & 2 & $\w_{3},\w_{13}$ & 5 \\ 
  &  & 0 & $\w_{3},\w_{26}$ & $\infty$ \\ 
  &  & 1 & $\w_{6},\w_{13}$ & 3 \\ 
  &  & 0 & $\w_{6},\w_{26}$ & $\infty$ \\ 
  &  & 0 & $\w_{2},\w_{3},\w_{13}$ & $\infty$ \\ 
\hline \hline
 39 & 2 & 7 &  & 0 \\ 
  &  & 4 & $\w_{2}$ & 0 \\ 
  &  & 4 & $\w_{3}$ & 0 \\ 
  &  & 3 & $\w_{6}$ & 4 \\ 
  &  & 3 & $\w_{13}$ & 0 \\ 
  &  & 4 & $\w_{26}$ & 0 \\ 
  &  & 0 & $\w_{39}$ & $\infty$ \\ 
  &  & 3 & $\w_{78}$ & 0 \\ 
  &  & 2 & $\w_{2},\w_{3}$ & 6 \\ 
  &  & 2 & $\w_{2},\w_{13}$ & 2 \\ 
  &  & 0 & $\w_{2},\w_{39}$ & $\infty$ \\ 
  &  & 0 & $\w_{3},\w_{13}$ & $\infty$ \\ 
  &  & 2 & $\w_{3},\w_{26}$ & 2 \\ 
  &  & 1 & $\w_{6},\w_{13}$ & 2 \\ 
  &  & 0 & $\w_{6},\w_{26}$ & $\infty$ \\ 
  &  & 0 & $\w_{2},\w_{3},\w_{13}$ & $\infty$ \\
  \hline 
\end{supertabular}
 \end{multicols*}

\end{document}